\def\R{\mathbb{R}}
\def\Eps{\mathcal{E}}
\def\E{\mathbb{E}}
\def\L{\mathcal{L}}
\def\D{\mathcal{D}}
\def\O{\mathcal{O}}
\def\B{\mathcal{B}}
\def\P{\mathbb{P}}
\def\1{\mathbf{1}}
\def\X{\mathcal{X}}
\def\eps{\epsilon}
\newcommand*{\rom}[1]{\expandafter\@slowromancap\romannumeral #1@}
\DeclareMathOperator*{\argmin}{arg\,min}
\newtheorem{definition}{Definition}
\newtheorem{lemma}{Lemma}
\newtheorem{theorem}{Theorem}
\renewenvironment{proof}{{\bfseries Proof.}}{}
\title{Online DR-Submodular Maximization with Stochastic Cumulative Constraints}
\author{

    Prasanna Raut\thanks{Equal contribution} \textsuperscript{\rm 1},
    Omid Sadeghi\footnotemark[1] \textsuperscript{\rm 2},
    Maryam Fazel \textsuperscript{\rm 2}
    
}
\begin{document}
\maketitle
\begin{abstract}
  In this paper, we consider online continuous DR-submodular maximization with linear stochastic long-term constraints. Compared to the prior work on online submodular maximization \cite{chenOnlineContinuousSubmodular2018}, our setting introduces the extra complication of stochastic linear constraint functions that are i.i.d. generated at each round. To be precise, at step $t\in\{1,\dots,T\}$, a DR-submodular utility function $f_t(\cdot)$ and a constraint vector $p_t$, i.i.d. generated from an unknown distribution with mean $p$, are revealed after committing to an action $x_t$ and we aim to maximize the overall utility while the expected cumulative resource consumption $\sum_{t=1}^T \langle p,x_t\rangle$ is below a fixed budget $B_T$. Stochastic long-term constraints arise naturally in applications where there is a limited budget or resource available and resource consumption at each step is governed by stochastically time-varying environments. We propose the Online Lagrangian Frank-Wolfe (OLFW) algorithm to solve this class of online problems. We analyze the performance of the OLFW algorithm and we obtain sub-linear regret bounds as well as sub-linear cumulative constraint violation bounds, both in expectation and with high probability.
\end{abstract}

\section{Introduction}

The Online Convex Optimization (OCO) problem has been extensively studied in the literature \citep{hazanIntroductionOnlineConvex2017,10.1561/2200000018,zinkevichOnlineConvexProgramming,orabona2019modern}. In this problem, a sequence of arbitrary convex cost functions $\{f_t(\cdot)\}_{t=1}^T$ are revealed one by one by ``nature" and at each round $t\in[T]$, the decision maker chooses an action $x_t \in \X$, where $\X$ is the fixed domain set, before the corresponding function $f_t(\cdot)$ is revealed. The goal is to minimize the regret defined as \citep{zinkevichOnlineConvexProgramming}
\begin{equation*}
\sum_{t=1}^T f_t(x_t)-\min_{x \in \X}\sum_{t=1}^T f_t(x).
\end{equation*} 
In other words, regret characterizes the difference between the overall cost incurred by the decision maker and that of a fixed benchmark action which has access to all the cost functions $\{f_t\}_{t=1}^T$.

In many applications, however, in addition to maximizing the total reward (minimizing the overall cost), there are restrictions on the sequence of decisions made by the learner that need to be satisfied on average \citep{10.5555/2722129.2722222,10.1145/3164539,10.1145/2600057.2602844,rivera2018online}. 
Therefore, it may be beneficial to sacrifice some of the reward to meet other desired goals or restrictions over the time horizon. 
Such long-term constraints arise naturally in applications with limited budget (resource) availability \citep{doi:10.1287/mnsc.2018.3174,besbes2012blind,ferreira2018online}.\\
As an illustrative example, consider the online ad allocation problem for an advertiser. At each round $t\in[T]$, the advertiser should choose her investment on ads to be placed on $n$ different websites. Beyond the immediate goal of maximizing the overall impressions of the ads, the advertiser needs to balance her total investment against an allotted budget on a daily, monthly or yearly basis \citep{doi:10.1287/mnsc.2018.3174}. However, the cost of ad placement in each round depends on the number of clicks the ads receive, so they are not known ahead of time. Therefore, the advertiser needs to strike the right balance between the total reward and budget used. See Appendix B for a number of other motivating applications that can be naturally cast in our framework.\\
In this paper, we study a new class of online allocation problems with long-term resource constraints where the utility functions are DR-submodular (and not necessarily concave) and the constraint functions are linear with coefficient vectors drawn i.i.d.\ from some unknown underlying distribution. 
The problem has been extensively studied in the convex setting \citep{yuanOnlineConvexOptimization,weiOnlinePrimalDualMirror2019,neelyOnlineConvexOptimization2017,liakopoulosCautiousRegretMinimization}; furthermore, \citet{sadeghiOnlineContinuousDRSubmodular2019} considered a similar framework under the assumption that the linear constraint functions are chosen adversarially. However, \citet{sadeghiOnlineContinuousDRSubmodular2019} do not provide 
any high probability bounds for the regret and constraint violation with random i.i.d.\ linear constraints, and their expected constraint violation bound is worse than ours as well (see Section \ref{related} and the Section \ref{connection} for an overview of related work and comparison of our results with the existing bounds respectively). 
In this paper, we provide the \emph{first} sub-linear bounds for the regret and total budget violation that hold in expectation as well as with high probability. Specifically, our contributions are as follows:
\begin{itemize}
	\item In Section \ref{4.1}, We propose the Online Lagrangian Frank-Wolfe (OLFW) algorithm for this class of online continuous DR-submodular maximization problems with stochastic cumulative constraints. The OLFW algorithm is inspired by the quadratic penalty method in constrained optimization literature \citep{nocedalNumericalOptimization1999} and it generalizes a Frank-Wolfe variant proposed by \citet{chenOnlineContinuousSubmodular2018} for solving online continuous DR-submodular maximization problems to take into account the additional stochastically time-varying linear constraints. Note that this extension is not straightforward and the choice of the penalty function and the update rule for the dual variable are crucial in obtaining bounds for the total budget violation as well as the regret (see Section \ref{4.1} for more details).
	\item We analyze the performance of the OLFW algorithm with high probability and in expectation in Section \ref{4.2} and Section \ref{4.3} respectively and we establish the first sub-linear \emph{expected} and \emph{high probability} bounds on both the \emph{regret} and \emph{total budget violation} of the algorithm.
\end{itemize}
Finally, in Section \ref{exp}, we demonstrate the effectiveness of our proposed algorithm on simulated and real-world problem instances, and compare the performance of the OLFW algorithm with several baseline algorithms. All the missing proofs are provided in the Appendix.
\subsection{Notation}
$[T]$ is used to denote the set $\{1,2,\dots,T\}$. For $u\in\mathbb{R}$, we define $[u]_+ :=\max\{u,0\}$. For a vector $x\in \R^n$, we use $x_i$ to denote the $i$-th entry of $x$. The inner product of two vectors $x,y\in\mathbb{R}^n$ is denoted by either $\langle x, y \rangle$ or $x^T y$. Also, for two vectors $x,y\in \mathbb{R}^n$, $x\preceq y$ implies that $x_i \leq y_i ~\forall i\in[n]$. A function $f:\mathbb{R}^n \to \mathbb{R}$ is called monotone if for all $x,y$ such that $x\preceq y$, $f(x)\leq f(y)$ holds. For a vector $x\in \R^n$, we use $\|x\|$ to denote the Euclidean norm of $x$. The unit ball of the Euclidean norm is denoted by $\B$, i.e., $\B=\{x\in \R^n~|~\|x\|\leq 1\}$. For a convex set $\X$, we will use $\mathcal{P}_{\X}(y)=\argmin_{x\in \X}\|x-y\|$ to denote the projection of $y$ onto set $\X$. The Fenchel conjugate of a function $f:\mathbb{R}^n \to \mathbb{R}$ is defined as $f^*(y)=\sup_{x}(x^Ty-f(x))$.

\section{Preliminaries}
\subsection{DR-submodular functions}\label{dr}
\begin{definition}\label{def:dr}
	A differentiable function $f:\X \rightarrow \mathbb{R}$, $\X\subset \mathbb{R}_+^n$, is called DR-submodular if
	\begin{equation}
	x\succeq y \Rightarrow \nabla f(x) \preceq \nabla f(y)\nonumber.
	\end{equation}
\end{definition}
In other words, $\nabla f$ is element-wise decreasing and satisfies the DR (Diminishing Returns) property.\\
If $f$ is twice differentiable, the DR property is equivalent to the Hessian matrix being element-wise non-positive. Note that for $n=1$, the DR property is equivalent to concavity. However, for $n>1$, concavity corresponds to negative semidefiniteness of the Hessian matrix (which is not equivalent to the Hessian matrix being element-wise non-positive). DR-submodular functions are also known as ``smooth submodular'' in the submodularity literature (e.g., see \citet{vondrakOptimalApproximationSubmodular2008}). \citet{bianGuaranteedNonconvexOptimization2016} showed that a DR-submodular function $f$ is concave along any non-negative and any non-positive direction; that is, if $t\geq 0$ and $v\in \mathbb{R}^n$ satisfies $v\succeq 0$ or $v\preceq 0$, we have
\begin{equation*}
f(x+tv)\leq f(x)+t\langle \nabla f(x),v\rangle.
\end{equation*}
See Appendix A for examples of non-concave DR-submodular functions.

\section{Problem statement}
Consider the following overall offline optimization problem:
\begin{equation}\label{prob}
\begin{array}{ll}
\mbox{maximize}& \sum_{t=1}^T f_t (x_t)\\
\mbox{subject to}& x_t \in \X, ~\forall t\in[T]\\
& \sum_{t=1}^T \langle p, x_t \rangle\leq B_T .
\end{array}
\end{equation}
The online setup is as follows: At each round $t\in[T]$, the algorithm chooses an action $x_t \in \X$, where $\X \subset \R_+^n$ is a fixed, known set. Upon committing to this action, the utility function $f_t:\X \to \R_+$ and a random i.i.d. sample $p_t \sim \D(p, \Sigma)$ are revealed and the algorithm receives a reward of $f_t(x_t)$ while using $\langle p,x_t\rangle$ of its fixed total allotted budget $B_T$. 
The overall goal is to maximize the total obtained reward while satisfying the budget constraint asymptotically (i.e., $\sum_{t=1}^T\langle p, x_t \rangle -B_T$ being sub-linear in $T$). \citet{mahdaviEfficientConstrainedRegret2012} considered a similar setup and performance metric for the special case of linear utility functions.\\
Note that our proposed algorithm can handle multiple linear constraints as well, and similar regret and constraint violation bounds can be derived. However, for ease of notation, we focus on the case with only one linear constraint.\\
We make the following assumptions about our problem framework:\\
\textbf{A1.} The domain $\X \subset \R_+^n$ is a closed, bounded, convex set containing the origin, i.e., $0\in \X$. We denote the diameter of $\X$ with $R$; i.e., $R:=\max_{x,y \in \X}\|y-x\|$.\\
\textbf{A2.} For all $t\in [T]$, the utility function $f_t(\cdot)$ is normalized (i.e., $f_t(0)=0$), monotone, DR-submodular, $\beta_f$-Lipschitz and $L$-smooth. In other words, for all $x,y\in \X$ and $u\in \mathbb{R}^n$ where $u\succeq 0$ or $u\preceq 0$, the following holds:
\begin{align*}
f_t(x+u)-f_t(x)&\geq \langle u, \nabla f_t (x)\rangle -\frac{L}{2}\|u\|^2\\
|f_t(y)-f_t(x)|&\leq \beta_f \|y-x\|.
\end{align*}
\textbf{A3.}\label{A3} For all $t\in [T]$, $p_t\in \R_+^n$ is i.i.d. generated from the distribution $\D$ with bounded support $\beta_p \B \cap \R_+^n$, mean $p\succeq 0$ and covariance matrix $\Sigma$, i.e., $p_t \sim \D(p, \Sigma)$.\\
Let $\beta = \max\{\beta_f,\beta_p\}$.
Under the above assumptions, we have:
\begin{align*}
	F&:=\max_{t\in[T]}\max_{x,y\in\mathcal{X}}|f_t (x)-f_t (y)|\leq \beta R<\infty\\
	G&:=\max_{p' \sim \D(p, \Sigma)}\max_{x\in\mathcal{X}}|\langle p',x\rangle -\frac{B_T}{T}|\leq \beta R-\frac{B_T}{T}<\infty.
\end{align*}
\subsection{Performance metric}
We characterize the performance of our proposed algorithm through bounding the notions of regret and cumulative constraint violation which are defined below:
\begin{definition}
The $(1-\frac{1}{e})$-regret is defined as:
\begin{equation*}
R_T = (1-\frac{1}{e})\max_{x \in \X^*}\sum_{t=1}^T f_t (x)-\sum_{t=1}^T f_t (x_t),
\end{equation*}
where:
\begin{equation*}
\X^* = \{x\in \X:\sum_{t=1}^{T}\langle p,x \rangle \leq B_T\}=\{x\in \X:\langle p,x \rangle \leq \frac{B_T}{T}\}.
\end{equation*}
\end{definition}
The regret metric $R_T$ quantifies the difference of the 
reward obtained by the algorithm and the $(1-\frac{1}{e})$-approximation of the 
reward of the best fixed benchmark action that has access to all the utility functions $f_t ~\forall t\in[T]$, the \emph{mean} $p$  of the linear constraint functions, and satisfies the cumulative budget constraint.
Note that $1-\frac{1}{e}$ is the optimal approximation ratio for offline continuous DR-submodular maximization; in other words, even if all the online input were available beforehand, we could only obtain a $(1-\frac{1}{e})$ fraction of the maximum reward in polynomial time. The $(1-\frac{1}{e})$-regret is commonly used in the online submodular maximization literature  \citep{chenOnlineContinuousSubmodular2018}. 
\begin{definition}
	The cumulative constraint violation is defined as follows:
\begin{equation*}
C_T=\sum_{t=1}^T \langle p_,x_t \rangle -B_T.
\end{equation*}
\end{definition}
Note that since $p_t~\forall t\in[T]$ is i.i.d. drawn from the distribution $\D$ with mean $p$, our cumulative constraint violation metric $C_T$ is defined with respect to the true underlying fixed linear constraint $p$ (as opposed to $p_t$). 

\begin{table*}[t]\centering 
	\begin{tabular}{c|c|c|c|c|c}\label{table1}
		Paper & Cost (utility) & Constraint & Window size & Regret bound & Constraint violation bound \\ \hline
		\citet{yuanOnlineConvexOptimization} & convex & convex (fixed) & --- & $\O (\sqrt{T})$ & $\O (T^{\frac{3}{4}})$ \\ \hline
		\citet{weiOnlinePrimalDualMirror2019} & convex & convex (stochastic) & --- & $\O (\sqrt{T})$ & $\O (\sqrt{T})$ \\ \hline
		\citet{neelyOnlineConvexOptimization2017} & convex & convex (adversarial) & $1$ & $\O (\sqrt{T})$ & $\O (\sqrt{T})$ \\ \hline
		\citet{liakopoulosCautiousRegretMinimization}$^{(a)}$ & convex & convex (adversarial) & $W$ & $\O (\sqrt{T}+\frac{WT}{V})$ & $\O (\sqrt{VT})$ \\ \hline
		\citet{sadeghiOnlineContinuousDRSubmodular2019} & DR-submodular & linear (adversarial) & $W$ & $\O (\sqrt{WT})$ & $\O (W^{\frac{1}{4}}T^{\frac{3}{4}})$ \\ \hline
	\end{tabular}
	\caption{State of the art results for online problems with cumulative constraints in various settings. Note that in (a), $V\in (W,T)$ is a tunable parameter.}
\end{table*}

\subsection{Related work}\label{related}
Consider the following general framework of online problems with long-term constraints: At round $t\in[T]$, the player chooses $x_t \in \mathcal{X}$. Then, cost (utility) function $f_t: \mathcal{X}\rightarrow \R$ ($\mathcal{X}$ is a fixed convex set) and constraint function $g_t: \mathcal{X}\rightarrow \R$ are revealed, the player incurs a loss (obtains a reward) of $f_t(x_t)$ and uses the amount $g_t (x_t)$ of her budget (with the long-term constraint $\sum_{t=1}^Tg_t(x_t)\leq 0$). This problem has been extensively studied under various assumptions where the cost (utility) functions are adversarially chosen and are assumed to be linear, convex or DR-submodular and the constraint functions are linear or convex and are either fixed (i.e., $g_t(\cdot)=g(\cdot)~\forall t\in [T]$), stochastic and i.i.d drawn from some unknown distribution, or adversarial. For the setting with adversarial utility and constraint functions, \citet{mannorOnlineLearningSample} provided a simple counterexample to show that regardless of the decisions of the algorithm, it is impossible to guarantee sub-linear regret against the benchmark action while the overall budget violation is sub-linear. Therefore, prior works in this setting have further restricted the fixed comparator action to be chosen from $\X_W = \{x\in \X:\sum_{\tau=t}^{t+W-1}g_{\tau}(x)\leq 0,~1\leq t\leq T-W+1\}$. In other words, in addition to merely satisfying the overall cumulative constraint (which corresponds to the $W=T$ case), the benchmark action is required to satisfy the budget constraint proportionally over any window of length $W$. On the other hand, for fixed or stochastic constraint functions, sub-linear regret and constraint violation bounds have been derived in the literature. A summary of the state of the art results for online problems with long-term constraints is provided in Table $1$.

\section{Online Lagrangian Frank-Wolfe (OLFW) algorithm}
In this section of the paper, we first introduce our proposed algorithm, namely the Online Lagrangian Frank-Wolfe (OLFW) algorithm, in Section \ref{4.1} and subsequently, we analyze the performance of the algorithm with high probability and in expectation in Section \ref{4.2} and Section \ref{4.3} respectively. 
\subsection{Algorithm}\label{4.1}
The Online Lagrangian Frank-Wolfe (OLFW) algorithm is presented in Algorithm \ref{olfw}. First, note that for all $t\in[T]$, $x_t=\frac{1}{K}\sum_{k=1}^K v_t^{(k)}$ is the average of vectors in the convex domain $\X$ and hence, $x_t \in \X$. The intuition for using $K$ online maximization subroutines to update $x_t$ is the Frank-Wolfe variant proposed in \citet{bianGuaranteedNonconvexOptimization2016} to obtain the optimal approximation guarantee of $1-\frac{1}{e}$ for solving the offline DR-submodular maximization problem without the additional linear constraints. To be more precise, consider the first iteration $t=1$ of our online setting (ignoring the linear cumulative constraints) and the corresponding DR-submodular utility function $f_1(\cdot)$ arriving at this step. Note that $f_1$ is not revealed until the algorithm commits to an action $x_1 \in \mathcal{X}$. If we were in the offline setting, we could use the mentioned Frank-Wolfe variant of \citet{bianGuaranteedNonconvexOptimization2016}, run it for $K$ iterations and maximize $f_1$ over $\mathcal{X}$. Starting from $x_1^{(1)}=0$, for all $k\in[K]$, we would find a vector $v_1^{(k)}$ that maximizes $\langle x,\nabla f_1(x_1^{(k)}) \rangle$ over $x\in \mathcal{X}$, perform the update $x_1^{(k+1)}=x_1^{(k)}+\frac{1}{K} v_1^{(k)}$ and derive $x_1=x_1^{(K+1)}$ as the output. However, in the online setting, the utility function $f_1$ is not available before committing to the action $x_1$. Therefore, for each $k\in[K]$, we instead use a separate instance of a no-regret online linear maximization algorithm to obtain $v_1^{(k)}$. We repeat the same process for the subsequent utility functions $\{f_t\}_{t>1}$. This intuition was first provided in \citet{chenOnlineContinuousSubmodular2018} and they managed to obtain an $\mathcal{O}(\sqrt{T})$ regret bound for the unconstrained online monotone submodular maximization problem.\\
Our choice of Lagrangian function is inspired by the quadratic penalty method in constrained optimization \citep{nocedalNumericalOptimization1999}. The penalized formulation of the overall optimization problem $(\ref{prob})$ with quadratic penalty function could be written as follows:
\begin{align*}
&\max_{x_t}~~\sum_{t=1}^T f_t(x_t)-\frac{1}{2\delta \mu}\big(\sum_{t=1}^T \langle p,x_t \rangle -B_T\big)^2\\
&\text{subject to}~~x_t \in \X~\forall t\in[T].
\end{align*}
Considering that the Fenchel conjugate of the function $h(\cdot)=\frac{1}{2\delta \mu}(\cdot)^2$ is $h^*(\cdot)=\frac{\delta \mu}{2}(\cdot)^2$, we can write the above problem in the following equivalent form:
\begin{align*}
\max_{x_t}\min_{\lambda}&~~\sum_{t=1}^T f_t(x_t)-\lambda\big(\sum_{t=1}^T \langle p,x_t \rangle -B_T\big)+\frac{\delta \mu}{2}\lambda^2\\
\text{subject to}&~~x_t \in \X~\forall t\in[T].
\end{align*}
Therefore, the corresponding Lagrangian function at round $t\in [T]$ is $\L_t(x,\lambda)=f_t(x)-\lambda (\langle p,x\rangle -\frac{B_T}{T})+\frac{\delta \mu}{2}\lambda^2$. However, $p$ is unknown to the online algorithm. Therefore, we alternatively use $\widehat{p}_t := \frac{1}{t-1}\sum_{s=1}^{t-1}p_s$ instead of $p$ in the Lagrangian function. Note that $\widehat{p}_t$ is the empirical estimation of $p$ at round $t$.
\begin{algorithm}[t]
	\caption{Online Lagrangian Frank-Wolfe (OLFW)}
	\begin{algorithmic}\label{olfw}
		\STATE \textbf{Input:} $\X$ is the constraint set, $T$ is the horizon, $\mu>0$, $\delta>0$, $\{\gamma_t\}_{t=1}^{T}$ and $K$.
		\STATE \textbf{Output:} $\{x_t :1\leq t\leq T\}$.
		\STATE Initialize $K$ instances $\{\Eps_k\}_{k\in[K]}$ of Online Gradient Ascent with step size $\mu$ for online maximization of linear functions over $\mathcal{X}$.
		\FOR{$t=1$ {\bfseries to} $T$}
		\STATE $x_t^{(1)}=0$.
		\FOR{$k=1$ {\bfseries to} $K$}
		\STATE Let $v_t^{(k)}$ be the output of oracle $\mathcal{E}_k$ from round $t-1$.
		\STATE $x_t^{(k+1)}=x_t^{(k)}+\frac{1}{K} v_t^{(k)}$.
		\ENDFOR
		\STATE Set $x_t =x_t^{(K+1)}$.
		\STATE Let $\widehat{p}_t := \frac{1}{t-1}\sum_{s=1}^{t-1}p_s$ for $t >1$.
		\STATE Let \[ \widetilde{g}_t(\cdot)= 
		\begin{cases}
		\langle \widehat{p}_t, \cdot \rangle -\frac{B_T}{T} &\quad\text{expectation analysis (\rom{1})}\\
		\langle \widehat{p}_t, \cdot \rangle -\frac{B_T}{T} - \gamma_t &\quad\text{high probability analysis (\rom{2})}\\
		\end{cases}.
		\] 
		\STATE Set $\lambda_{t}= \frac{[\widetilde{g}_t(x_t)]_+}{\delta \mu}$ for $t > 1$ and 0 otherwise.
		\STATE Play $x_t$ and observe the Lagrangian function $\L_t(x_t,\lambda_t)=f_t(x_t)-\lambda_t \widetilde{g}_t(x_t)+\frac{\delta \mu}{2}\lambda_t^2$.
		\FOR{$k=1$ {\bfseries to} $K$}
		\STATE Feedback $\langle v_t^{(k)},\nabla_x \L_t (x_t^{(k)},\lambda_t)\rangle$ as the payoff to be received by $\mathcal{E}_k$.
		\ENDFOR
		\ENDFOR
	\end{algorithmic}
\end{algorithm}

We first provide a lemma which is central to obtaining the regret and constraint violation bounds both in expectation and with high probability.
\begin{lemma}\label{lem4.1}
	Let $x\in \X$ be a fixed vector. In the OLFW algorithm, set $\delta = \beta^2$. We then have:
	\begin{align}
	\sum_{t=1}^{T}\big((1-\frac{1}{e})f_{t}(x)-f_{t}(x_{t})\big) & \leq \frac{LR^2T}{2K}+\frac{R^2}{\mu}+\beta^2 \mu T\nonumber\\
	&+\sum_{t=1}^{T}\lambda_t \widetilde{g}_t(x) \label{reg44}.
	\end{align}
\end{lemma}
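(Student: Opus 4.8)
The proof decomposes the per-round suboptimality gap via the Lagrangian $\L_t$ and then sums. The key observation is that the $K$ online linear maximization oracles $\Eps_k$ collectively mimic the Frank-Wolfe variant of Bian et al.\ \cite{bianGuaranteedNonconvexOptimization2016} applied to $\L_t(\cdot,\lambda_t)$, which shares the DR-submodular concavity-along-nonnegative-directions structure of $f_t$ (the added term $-\lambda_t\langle \widehat p_t,\cdot\rangle + \tfrac{\delta\mu}{2}\lambda_t^2$ is linear in $x$, hence does not affect smoothness or the diminishing-returns step-analysis). So first I would fix $x\in\X$, and for each $t$ run the standard Frank-Wolfe telescoping argument on the iterates $x_t^{(1)}=0,\dots,x_t^{(K+1)}=x_t$: using $L$-smoothness (A2) and the update $x_t^{(k+1)}=x_t^{(k)}+\tfrac1K v_t^{(k)}$, one gets $\L_t(x_t^{(k+1)},\lambda_t)-\L_t(x_t^{(k)},\lambda_t)\geq \tfrac1K\langle v_t^{(k)},\nabla_x\L_t(x_t^{(k)},\lambda_t)\rangle - \tfrac{L R^2}{2K^2}$ (the smoothness constant of $\L_t(\cdot,\lambda_t)$ in $x$ is still $L$). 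Summing over $k$ and using the DR-submodular inequality $\L_t(x,\lambda_t)-\L_t(x_t^{(k)},\lambda_t)\leq \langle x - x_t^{(k)},\nabla_x\L_t(x_t^{(k)},\lambda_t)\rangle$ applied with the "optimal" direction, one obtains, exactly as in \cite{chenOnlineContinuousSubmodular2018,bianGuaranteedNonconvexOptimization2016}, the approximate-optimality bound
\[
(1-\tfrac1e)\L_t(x,\lambda_t)-\L_t(x_t,\lambda_t)\ \leq\ \frac{LR^2}{2K}\ +\ \frac1K\sum_{k=1}^K\big(\langle x,\nabla_x\L_t(x_t^{(k)},\lambda_t)\rangle-\langle v_t^{(k)},\nabla_x\L_t(x_t^{(k)},\lambda_t)\rangle\big).
\]

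**Handling the oracle regret and assembling.**
Next I would sum the displayed bound over $t=1,\dots,T$. The second term on the right becomes $\tfrac1K\sum_{k=1}^K \mathrm{Regret}_k$, where $\mathrm{Regret}_k = \sum_{t=1}^T\langle x - v_t^{(k)},\nabla_x\L_t(x_t^{(k)},\lambda_t)\rangle$ is the regret of the $k$-th Online Gradient Ascent instance (fed the payoff vectors $\nabla_x\L_t(x_t^{(k)},\lambda_t)$ and played on the fixed set $\X$ of diameter $R$). The standard OGA guarantee with step size $\mu$ gives $\mathrm{Regret}_k \leq \tfrac{R^2}{2\mu} + \tfrac{\mu}{2}\sum_{t=1}^T \|\nabla_x\L_t(x_t^{(k)},\lambda_t)\|^2$. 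Here I must bound the gradient norm: $\nabla_x\L_t(x,\lambda_t) = \nabla f_t(x) - \lambda_t\widehat p_t$, so $\|\nabla_x\L_t\|\leq \beta_f + \lambda_t\|\widehat p_t\| \leq \beta + \lambda_t\beta$; this is where the specific dual-variable choice $\lambda_t = [\widetilde g_t(x_t)]_+/(\delta\mu)$ and the setting $\delta=\beta^2$ are used to control the quadratic term so that, after averaging over $k$ and combining with the $\tfrac{\delta\mu}{2}\lambda_t^2$ term already present in $\L_t$, the $\lambda_t$-dependent pieces either cancel or collapse into the clean $\beta^2\mu T$ term. (More carefully: $\tfrac{\mu}{2}(\beta+\lambda_t\beta)^2 \le \mu\beta^2 + \mu\beta^2\lambda_t^2 = \mu\beta^2 + \delta\mu\lambda_t^2$, and the extra $\delta\mu\lambda_t^2/2$-type surplus beyond what $\tfrac{\delta\mu}{2}\lambda_t^2$ in the Lagrangian absorbs is itself nonpositive once one moves the $\L_t(x,\lambda_t)$ term's $+\tfrac{\delta\mu}{2}\lambda_t^2$ to the left — I would need to track the bookkeeping of which $\tfrac{\delta\mu}{2}\lambda_t^2$ lives on which side.) Finally, expanding $\L_t(x,\lambda_t) = f_t(x) - \lambda_t\widetilde g_t(x) + \tfrac{\delta\mu}{2}\lambda_t^2$ and $\L_t(x_t,\lambda_t) = f_t(x_t) - \lambda_t\widetilde g_t(x_t) + \tfrac{\delta\mu}{2}\lambda_t^2$, the $\tfrac{\delta\mu}{2}\lambda_t^2$ terms on both sides cancel after multiplying the $x$-side by $(1-\tfrac1e)<1$ — or, more precisely, leave a nonpositive residual $-\tfrac1e\cdot\tfrac{\delta\mu}{2}\lambda_t^2 \le 0$ that can be dropped — the $-\lambda_t\widetilde g_t(x_t)$ contribution is $\le 0$ by the choice $\lambda_t\propto[\widetilde g_t(x_t)]_+$ so it too can be dropped, and what remains is $\sum_t\big((1-\tfrac1e)f_t(x)-f_t(x_t)\big)$ on the left and $\tfrac{LR^2T}{2K}+\tfrac{R^2}{\mu}+\beta^2\mu T+\sum_t\lambda_t\widetilde g_t(x)$ on the right, which is \eqref{reg44}.

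**Main obstacle.**
The routine parts are the Frank-Wolfe telescoping and the OGA regret bound. The delicate part — and the one I would spend the most care on — is the bookkeeping of the quadratic-in-$\lambda_t$ terms: showing that the $\tfrac{\mu}{2}\|\nabla_x\L_t\|^2$ contribution from the oracle regret, which scales like $\mu\beta^2\lambda_t^2 = \delta\mu\lambda_t^2$, is exactly counterbalanced by the $+\tfrac{\delta\mu}{2}\lambda_t^2$ penalty terms in the Lagrangian values (one from $\L_t(x,\lambda_t)$ scaled by $(1-\tfrac1e)$, one from $-\L_t(x_t,\lambda_t)$), with the setting $\delta=\beta^2$ being precisely what makes the constants line up, and verifying that every discarded term ($-\lambda_t\widetilde g_t(x_t)$, the leftover $-\tfrac1e\tfrac{\delta\mu}{2}\lambda_t^2$, and any slack in $\lambda_t(\beta+\lambda_t\beta)^2$ vs.\ $\lambda_t^2\delta$) genuinely has the right sign. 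If the constants do not close exactly, one either adjusts the step size in the OGA instances or absorbs a harmless constant factor into the $\beta^2\mu T$ term; I'd check the paper's constants against this.
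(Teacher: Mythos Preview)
Your overall plan (Frank--Wolfe telescoping on the $K$ inner iterates, OGA regret for the $\Eps_k$, and a $\lambda_t^2$ cancellation using $\delta=\beta^2$) matches the paper's, but there is a genuine gap in the step where you invoke the DR-submodular inequality for $\L_t$.

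\textbf{The main gap.} You apply the Chen/Bian bound directly to $\L_t(\cdot,\lambda_t)$ and display $(1-\tfrac1e)\L_t(x,\lambda_t)-\L_t(x_t,\lambda_t)\le\ldots$. That argument needs, for each $k$, the inequality $\L_t(x,\lambda_t)-\L_t(x_t^{(k)},\lambda_t)\le \langle \nabla_x\L_t(x_t^{(k)},\lambda_t),\,x\rangle$, which in turn relies on \emph{monotonicity} (to pass from $(x-x_t^{(k)})\vee 0$ to $x$). But $\nabla_x\L_t=\nabla f_t-\lambda_t\widehat p_t$ can have negative entries, so $\L_t$ is not monotone and the step fails. (Your stated inequality with $x-x_t^{(k)}$ is concavity, which DR-submodular functions do not enjoy along arbitrary directions either.) The paper avoids this by \emph{splitting} $\nabla_x\L_t$ inside the smoothness estimate: it applies the monotone DR-submodular bound only to $f_t$, obtaining $f_t(x)-f_t(x_t^{(k)})\le\langle\nabla f_t(x_t^{(k)}),x\rangle$, and handles the linear part exactly via $\langle\widehat p_t,x\rangle=\widehat g_t(x)+B_T/T$. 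This split is precisely why the constraint must be \emph{linear} (the paper emphasizes that mere convexity would not suffice), and it is also why the final bound carries $\sum_t\lambda_t\widetilde g_t(x)$ with coefficient $1$ rather than the $(1-\tfrac1e)$ your route would produce.

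\textbf{The $\lambda_t^2$ bookkeeping.} Your accounting of the quadratic terms is also off. The term $\tfrac{\delta\mu}{2}\lambda_t^2$ in $\L_t$ is constant in $x$, so it cancels already in the smoothness step and never enters the recursion; there is no ``$-\tfrac1e\cdot\tfrac{\delta\mu}{2}\lambda_t^2$ residual'' to drop (and if there were, moving it to the right-hand side would make it \emph{positive}). The actual cancellation in the paper is between the oracle-regret contribution $\beta^2\mu\sum_t\lambda_t^2$ and the term $\sum_t\lambda_t\widetilde g_t(x_t)$ that appears on the left after rearranging: since $\lambda_t=[\widetilde g_t(x_t)]_+/(\delta\mu)$, one has $\lambda_t\widetilde g_t(x_t)=\delta\mu\lambda_t^2=\beta^2\mu\lambda_t^2$, so $\beta^2\mu\sum_t\lambda_t^2-\sum_t\lambda_t\widetilde g_t(x_t)\le 0$. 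You therefore should \emph{not} ``drop'' $-\lambda_t\widetilde g_t(x_t)$; it is exactly what kills the $\lambda_t^2$ term from the OGA bound.
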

\subsection{Performance analysis with high probability}\label{4.2}
In order to analyze the performance of the OLFW algorithm with high probability, the following lemmas detailing the concentration inequalities for the stochastic linear constraints are used.
\begin{lemma}\label{cor4.4}
	The following holds with probability at least $1-\eps$:
	\begin{align*}
	\sum_{t=2}^{T}\|\widehat{p}_t - p\| \leq C\sigma \sqrt{T\log\big(\frac{2nT}{\eps}\big)}.
	\end{align*}
\end{lemma}
\begin{lemma}\label{lem4.5}
	Let $x\in \X$ be fixed. Define $\widehat{g}_t(x) := \langle\widehat{p}_t, x\rangle - \frac{B_T}{T}$ and $g(x):= \langle p, x \rangle - \frac{B_T}{T}$. For a fixed $t \in \{2,3,\ldots,T\}$ and $\{\gamma_t := \sqrt{\frac{2G^2\log(\frac{2T}{\eps})}{t}}\}_{t=2}^{T}$, $|\widehat{g}_t(x) - g(x)| \leq \gamma_t$ holds with probability at least $1-\frac{\eps}{T}$.
\end{lemma}
\begin{proof}
	First, note that $\E[\widehat{g}_t(x)] = \E[\langle \widehat{p}_t, x\rangle - \frac{B_T}{T}] = \langle p,x \rangle -\frac{B_T}{T}= g(x)$. If $y_t = g_t(x)$ is a random variable, then by assumption, $y_t \in [-G, G]$ holds for each $t$, i.e., $y_t$ is a bounded random variable. Therefore we can apply Hoeffding's inequality and get $\P\{|\widehat{g}_t(x) - g(x)| > \gamma_t\} \leq 2\exp(-\frac{t\gamma_t^2}{2G^2})$. Substituting the value of $\gamma_t$ in the right hand side, we get that $\P\{|\widehat{g}_t(x) - g(x)| > \gamma_t\} \leq \frac{\eps}{T}$. The result follows immediately.\qed
\end{proof}


Now, we have all the machinery to obtain the high probability performance bounds of the OLFW algorithm. 
\begin{theorem}\label{thm4.6}
\textbf{(High probability regret bound)} Let $\eps \in (0,1)$ be given. Set $\mu= \frac{R}{\beta\sqrt{T}}$, $K = \sqrt{T}$, $\delta = \beta^2$ and $\{\gamma_t\}_{t=2}^{T}$ be chosen according to  Lemma \ref{lem4.5}. Then, the OLFW algorithm with update (\rom{2}) for $\widetilde{g}_t(\cdot)$ obtains the following regret bound with probability at least $1-\eps$.
\begin{align}
R_T \leq \big(\frac{LR^2}{2} + 2R\beta\big)\sqrt{T}\nonumber.
\end{align}
\end{theorem}
\begin{proof}
We begin from Lemma \ref{lem4.1}. Substitute the benchmark $x = x^*$ as the fixed vector in ($\ref{reg44}$) and the constants as given in the hypothesis. We get: $R_T \leq (\frac{LR^2}{2} + 2R\beta)\sqrt{T} + \sum_{t=1}^{T}\lambda_t \widetilde{g}_t(x^*)$. Now let us bound $\sum_{t=1}^{T}\lambda_t \widetilde{g}_t(x^*)$. From Lemma \ref{lem4.5}, we have that with probability at least $1 - \frac{\eps}{T}$, $\widehat{g}_t(x^*) - \gamma_t \leq g(x^*)$ holds, i.e., $\widetilde{g}_t(x^*) \leq g(x^*)$. Also, $g(x^*) \leq 0$ holds according to the definition of the benchmark action. Therefore, we have: $\widetilde{g}_t(x^*) \leq 0$. As $\lambda_t \geq 0$, $\lambda_t\widetilde{g}_t(x^*) \leq 0$ holds. Now, taking union bound over all $t\in[T]$, we have with probability at least $1 - \eps$ that $\sum_{t=1}^{T}\lambda_t \widetilde{g}_t(x^*) \leq 0$.
The result follows immediately. \qed
\end{proof}\\
We will use the following lemma to get performance bounds for the constraint violation.
\begin{lemma}\label{lem4.7}
Let $\{\gamma_t\}_{t=2}^{T}$ be defined as in Lemma \ref{lem4.5}, then the following holds.
\begin{align*}
C_T \leq \sum_{t= 1}^{T}[\widetilde{g}_t(x_t)]_+ + R\sum_{t=1}^{T}\|\widehat{p}_t-p\| + \sum_{t=2}^{T}\gamma_t,
\end{align*}
where $\widetilde{g}_t(\cdot)$ is derived using update (\rom{2}).
\end{lemma} 
\begin{theorem}\label{thm4.8}
(\textbf{High probability constraint violation bound}) Let $\eps \in (0,1)$ be given. Set $\mu = \frac{R}{\beta\sqrt{T}}$, $K = \sqrt{T}$, $\delta = \beta^2$ and $\{\gamma_t\}_{t=2}^{T}$ be chosen according to Lemma \ref{lem4.5}. Then the following holds with probability at least $1-\eps$ for the OLFW algorithm with update rule (\rom{2}).
\begin{align*}
C_T& \leq \sqrt{2G^2T\log(\frac{2T}{\eps})} + CR\sigma\sqrt{T\log(\frac{2nT}{\eps})}  \nonumber\\
&+ \frac{T}{B_T}R\beta F\sqrt{T}+\frac{TR\beta}{B_T}(\frac{LR^2}{2}+2R\beta) + R\beta.
\end{align*}
So, we obtain $\tilde{\O}(\sqrt{T})$ constraint violation bound with high probability.
\end{theorem}
\begin{proof}
We begin with Lemma \ref{lem4.1} again but now substitute $x= 0$ as the fixed vector in ($\ref{reg44}$).
\begin{align}
\frac{B_T}{T}\sum_{t=1}^T \lambda_t + \sum_{t=1}^{T}\lambda_t \gamma_t&\leq \underbrace{\sum_{t=1}^{T}f_{t}(x_{t})}_{\leq FT} +\frac{LR^2T}{2K}+\frac{R^2}{\mu} +\beta^2 \mu T.\label{regOT}
\end{align}
Rearranging and substituting the values of input parameters as given in the hypothesis, we get: 
\begin{align*}
\sum_{t=1}^T [\widetilde{g}_t(x_t)]_+  + \frac{T\delta \mu}{B_T}\sum_{t=1}^{T}\lambda_t \gamma_t &\leq \frac{T}{B_T}R\beta F\sqrt{T}\\
& +\frac{TR\beta}{B_T}(\frac{LR^2}{2}+2R\beta). 
\end{align*}
Both terms in the left hand side of the above equation are positive. Thus, we can drop the second term. We have: 
\begin{align*}
\sum_{t=1}^T [\widetilde{g}_t(x_t)]_+  &\leq \frac{T}{B_T}R\beta F\sqrt{T}+\frac{TR\beta}{B_T}(\frac{LR^2}{2}+2R\beta).
\end{align*}
Combining Lemma \ref{lem4.7} and the equation above, we obtain:
\begin{align}
C_T & \leq \frac{T}{B_T}R\beta F\sqrt{T} \nonumber +\frac{TR\beta}{B_T}(\frac{LR^2}{2}+2R\beta) \nonumber\\
& + R\sum_{t=1}^{T}\|\widehat{p}_t-p\| + \sum_{t=2}^{T}\gamma_t \nonumber.
\end{align}
Therefore, we can conclude:
\begin{align}
C_T& \leq \frac{T}{B_T}R\beta F\sqrt{T} \nonumber +\frac{TR\beta}{B_T}(\frac{LR^2}{2}+2R\beta) \nonumber\\
&+ \sqrt{2G^2T\log(\frac{2T}{\eps})} + \underbrace{R\sum_{t=1}^{T}\|\widehat{p}_t-p\|}_{\text{(A)}} \nonumber,
\end{align}
where the last inequality follows from summing $\gamma_t$'s. Now, Lemma \ref{cor4.4} tells us that $\text{(A)} \leq R\beta + CR\sigma\sqrt{T\log(\frac{2nT}{\eps})}$ holds with probability at least $1- \eps$. Thus, we get the result. \qed
\end{proof}\\
Theorem \ref{thm4.6} and Theorem \ref{thm4.8} are indeed the first high probability bounds obtained for the online DR-submodular maximization problem with stochastic cumulative constraints. Note that the $\O(\sqrt{T})$ regret bound obtained in Theorem \ref{thm4.6} is known to be optimal. 
\subsection{Performance analysis in expectation}\label{4.3}
We first provide a simple lemma that will be used throughout the analysis in expectation.
\begin{lemma}\label{lem4.9}
	For $t > 1$, we have:
	\begin{equation*}
	\E\|\widehat{p}_t-p\|^2 = \frac{Tr(\Sigma)}{t-1},
	\end{equation*}
	where $Tr(\Sigma)$ denotes the trace of the covariance matrix $\Sigma$.
\end{lemma}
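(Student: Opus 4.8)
The plan is to reduce the claim to the elementary fact that the squared Euclidean norm of an average of i.i.d.\ centered vectors has expectation equal to the single-sample variance divided by the sample size, and then identify the single-sample variance with $\mathrm{Tr}(\Sigma)$. First I would write $\widehat{p}_t - p = \frac{1}{t-1}\sum_{s=1}^{t-1}(p_s - p)$, using the definition of $\widehat{p}_t$ from the algorithm and the linearity of the sum. Squaring the Euclidean norm gives
\begin{equation*}
\|\widehat{p}_t - p\|^2 = \frac{1}{(t-1)^2}\sum_{s=1}^{t-1}\sum_{r=1}^{t-1}\langle p_s - p,\, p_r - p\rangle.
\end{equation*}

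Next I would take expectations and split the double sum into the diagonal terms ($s = r$) and the off-diagonal terms ($s \neq r$). For $s \neq r$, the vectors $p_s - p$ and $p_r - p$ are independent by assumption A3, and each has mean zero (since $\E[p_s] = p$), so $\E\langle p_s - p, p_r - p\rangle = \langle \E[p_s - p], \E[p_r - p]\rangle = 0$. For the diagonal terms, $\E\|p_s - p\|^2 = \sum_{i=1}^n \E\big[(p_{s,i} - p_i)^2\big] = \sum_{i=1}^n \Sigma_{ii} = \mathrm{Tr}(\Sigma)$, which is finite since $p_s$ has bounded support. There are $t-1$ diagonal terms, so
\begin{equation*}
\E\|\widehat{p}_t - p\|^2 = \frac{1}{(t-1)^2}\cdot (t-1)\cdot \mathrm{Tr}(\Sigma) = \frac{\mathrm{Tr}(\Sigma)}{t-1},
\end{equation*}
which is exactly the claim.

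There is no real obstacle here; the only points requiring a word of care are invoking independence (not merely uncorrelatedness) of the samples to annihilate the cross terms, and noting that the bounded-support assumption A3 guarantees $\mathrm{Tr}(\Sigma) < \infty$ so that all the expectations and the interchange of expectation with the finite sum are legitimate. I would present the computation in the three displayed lines above with one or two sentences of justification for the vanishing of the cross terms and the trace identity.
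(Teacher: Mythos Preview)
Your proof is correct. The paper takes a slightly different, more compressed route: it writes $\E\|\widehat{p}_t-p\|^2 = \mathrm{Tr}\big(\E[(\widehat{p}_t-p)(\widehat{p}_t-p)^T]\big) = \mathrm{Tr}(\mathrm{Cov}(\widehat{p}_t))$ and then invokes the standard fact that the covariance of the sample mean of $t-1$ i.i.d.\ vectors is $\Sigma/(t-1)$. Your approach instead expands the squared norm as a double sum of inner products and kills the cross terms explicitly via independence and mean-zero, which is exactly the computation hiding behind the paper's one-line appeal to $\mathrm{Cov}(\widehat{p}_t)=\Sigma/(t-1)$. The two arguments are equivalent in content; yours is more self-contained, the paper's is terser but assumes the reader already knows the sample-mean covariance formula.
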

Now, we present the main performance bounds in expectation, namely the expected regret bound and the expected cumulative constraint violation bound. In the Appendix, we have also considered the case where we only have access to unbiased stochastic gradient estimates of the utility functions $\{f_t\}_{t=1}^T$ and exact gradient computation is not possible. For this setting, we modify the OLFW algorithm through incorporating the variance reduction technique introduced by \citet{pmlr-v80-chen18c} and we obtain similar regret and constraint violation bounds in expectation for the modified algorithm.
\begin{theorem}\label{thm4.10}
	(\textbf{Expected regret bound})
	The regret bound of the OLFW algorithm with update rule (\rom{1}) is the following:
	\begin{align*}
	\E[R_T]\leq \tilde{\O}(T^{\frac{3}{4}}).
	\end{align*}
\end{theorem}
\begin{proof}
	We first observe from ($\ref{regOT}$) that $\sum_{t=1}^{T}\lambda_t \leq \O(T)$. Now, substitute $x = x^*$, the benchmark, in $(\ref{reg44})$ and take expectation on both sides to obtain:
	\begin{align*}
	\E[R_T]&\leq \frac{LR^2T}{2K}+\frac{R^2}{\mu}+\beta^2 \mu T \nonumber \\ 
	& +\E[\sum_{t=1}^{T}\lambda_t(\widehat{g}_t(x^*) - g(x^*))] + \sum_{t=1}^{T}\lambda_t g(x^*)\nonumber \\
	& \leq \frac{LR^2T}{2K}+\frac{R^2}{\mu}+\beta^2 \mu T + \E\underbrace{[\sum_{t=1}^{T}\lambda_{t}(\widehat{g}_{t}(x^*) - g(x^*))]}_{\text{(B)}}. 
	\end{align*}
	Now we bound (B) as follows:
	\begin{align*}
	\text{(B)} & = \sum_{t=1}^{T}\lambda_{t}(\widehat{g}_{t}(x^*) - g(x^*)) \nonumber \\
	& \leq  \sqrt{\sum_{t=1}^{T}\lambda_t^2}\sqrt{\sum_{t=1}^{T}(\widehat{g}_{t}(x^*) - g(x^*))^2} \nonumber \\
	& = \sqrt{\|\lambda\|^2}\sqrt{\sum_{t=1}^{T}(\langle\widehat{p}_t - p, x^* \rangle)^2} \nonumber \\
	& \leq \|\lambda \| \sqrt{\sum_{t=1}^{T}\|\widehat{p}_t - p \|^2R^2} \nonumber \\
	& = R\|\lambda\| \sqrt{\sum_{t=1}^{T}\|\widehat{p}_t-p\|^2}.
	\end{align*}
	Both the inequalities above are obtained using Cauchy-Schwarz inequality, where $\lambda := [\lambda_1, \lambda_2, \ldots, \lambda_T]^T$.
	
	Using the Cauchy-Schwarz inequality again, we have $\|\lambda\| \leq \sqrt{\|\lambda\|_1 \|\lambda\|_{\infty}}$. Thus, we obtain $\|\lambda\| \leq \sqrt{(\sum_{t=1}^{T}\lambda_t)(\frac{G}{\delta \mu})} \leq \O(T^{3/4})$. Therefore, the following holds:
	\begin{align}
	\|\lambda\| \leq \O(T^{3/4}) .\label{reg44_9d}
	\end{align}
	Using Jensen's inequality, we have \[\E\sqrt{\sum_{t=1}^{T}\|\widehat{p}_t - p\|^2} \leq \sqrt{\sum_{t=1}^{T}\E\|\widehat{p}_t-p\|^2.}\] We can use Lemma \ref{lem4.9} and write:
	\begin{align}
	\E\sqrt{\sum_{t=1}^{T}\|\widehat{p}_t - p\|^2} \leq \sqrt{Tr(\Sigma) \log(T)}\label{reg44_9e}.
	\end{align}
	Thus, combining ($\ref{reg44_9d}$) and ($\ref{reg44_9e}$), we obtain:
	\begin{equation*}
	\E[(\text{B})] \leq \O(T^{3/4}\sqrt{Tr(\Sigma)\log(T)}).
	\end{equation*}
	The result thus follows.
	\qed
\end{proof}
\paragraph{Remark.} The main challenge in bounding $R_T$ in expectation is the fact that in our algorithm, the choice of $\lambda_t$ is dependent on $\widehat{p}_t$ and thus, we cannot use 
\begin{equation*}
\E [\lambda_t \widehat{g}_t(x^*)]=\E [\lambda_t\E[\widehat{g}_t (x^*)]] = \E [\lambda_tg(x^*)]\leq 0
\end{equation*}
and this term is indeed the dominating term in the regret bound. However, as we saw earlier, we do not encounter this problem in the high probability setting due to subtracting $\gamma_t$ from all the constraint functions and using the concentration inequalities, and thus we were able to obtain $\mathcal{O}(\sqrt{T})$ high probability regret bound.
\begin{theorem}\label{thm4.11}
(\textbf{Expected cumulative constraint violation bound}) For the OLFW algorithm with update rule (\rom{1}), we have:
\begin{align}
\E[C_T] \leq & \frac{T}{B_T}R\beta F\sqrt{T} + R\sqrt{Tr(\Sigma)}\sqrt{T} \nonumber \\
&+\frac{TR\beta}{B_T}(\frac{LR^2}{2}+2R\beta) + R\beta. \nonumber
\end{align}
Therefore, $\E[C_T]\leq \tilde{\O}(\sqrt{T})$ holds.
\end{theorem}
Theorem \ref{thm4.10} and Theorem \ref{thm4.11} provide the first sub-linear expectation bounds on the regret and cumulative constraint violation of the online DR-submodular maximization problem with stochastic cumulative constraints. 

\subsection{Relation with previous results}\label{connection}
\begin{itemize}
	\item \citet{yuanOnlineConvexOptimization} studied a similar problem in the convex setting where all the constraint functions are deterministic and given offline and they obtained $\mathcal{O}(\sqrt{T})$ regret bound and $\mathcal{O}(T^{\frac{3}{4}})$ constraint violation bound. On the other hand, applying our OLFW algorithm with update rule (\rom{1}) to the online DR-submodular maximization problem subject to deterministic linear constraints, we obtain $\mathcal{O}(\sqrt{T})$ regret and constraint violation bounds simultaneously. Note that in this setting, $\Sigma = 0$ and thus, the dominating $\mathcal{O}(T^{\frac{3}{4}})$ term in the expected regret bound of the OLFW algorithm vanishes.
	\item \citet{jenattonOnlineOptimizationRegret2016} considered a related problem with concave utility functions and linear constraint functions where at each round, the reward functions arrive before committing to an action (i.e., the 1-lookahead setting) and long-term constraints are penalized through a penalty function in the objective. They obtained sub-linear bounds for the dynamic regret in their setting. On the contrary, in our framework, we deal with the extra complication that the utility function at each step is DR-submodular (and generally non-concave) and it is revealed after committing to an action. 
	\item \citet{mahdaviEfficientConstrainedRegret2012} considered the exact same framework as ours in the special case where the utility functions are linear and they obtained $\mathcal{O}(\sqrt{T})$ regret bound and $\mathcal{O}(T^{\frac{3}{4}})$ constraint violation bound in both expectation and high probability settings. Note that our OLFW algorithm obtains improved $\mathcal{O}(\sqrt{T})$ constraint violation bounds in expectation and with high probability for the more general setting of DR-submodular utility functions. 
	\item \citet{sadeghiOnlineContinuousDRSubmodular2019} considered the online DR-submodular maximization problem subject to linear constraints in the adversarial setting where the constraint functions are chosen arbitrarily. Their proposed OSPHG algorithm in the setting with window length $W=1$ could be adapted to obtain $\O(\sqrt{T})$ regret and $\O(T^{\frac{3}{4}})$ constraint violation bounds in expectation. However, the OSPHG algorithm fails to provide \emph{any} bounds for the high probability setting. On the other hand, our algorithm uses the current estimate of the cost vector to exploit the stochastic nature of the constraints. Furthermore, through using the update rule (\rom{2}) in the OLFW algorithm, we are able to guarantee sub-linear bounds in the high probability setting as well. 
\end{itemize}

\section{Numerical results}\label{exp}
We conduct numerical experiments, over simulated and real-world datasets in the following.\\
\textbf{Joke Recommendation} We look at the problem of DR-submodular function maximization over the \textit{Jester} dataset\footnote{http://eigentaste.berkeley.edu/dataset/}. We consider a fraction of the dataset where there are 100 jokes and user ratings from 10000 users are available for these jokes. The ratings take values in $[-10, 10]$, we re-scale them to be in $[0, 10]$. Let $R_{u,j}$ be the rating of user $u$ for joke $j$. As some of the user ratings are missing in the dataset, we set such ratings to be $5$. In the online setting, a user arrives and we have to recommend at most $M=15$ jokes to her. The utility function for each round $t\in[T]$ is of the form $f_t(x) = \sum_{i=1}^{100}R_{u_t, i}^t x_i+\sum_{i,j:i\neq j}\theta_{ij}x_i x_j$, where $u_t$ is the user being served in the current round. $\{\theta_{ij}\}_{i\neq j}$ are chosen such that the function is monotone. These DR-submodular utility functions capture the overall impression of the displayed jokes on the user. There is a limited total time (denoted by $B_T = 1.5T$) available to recommend the jokes to the users. For all $i \in [n], p_i$ denotes the average time it takes to read joke $i$. As some jokes are relatively longer, we do not want the user to spend more time on jokes which do not lead to larger utility. The linear budget functions are chosen randomly with entries uniformly drawn from $[0.03, 0.35]$. We compare the performance of our algorithm against the following strategies:
\begin{itemize}
    \item \textit{Uniform}: At every round, we assign 15 randomly chosen jokes to the user.
    \item \textit{Greedy}: We deploy an exploration-exploitation strategy where with probability 0.1, we randomly assign 15 jokes and with probability 0.9, we present the top 15  jokes based on the ratings observed so far.
    \item \textit{Meta-FW} \citep{chenOnlineContinuousSubmodular2018}: This corresponds to solving the unconstrained DR-submodular maximization problem (i.e., ignoring the budget constraints).
    \item \textit{Budget-Cautious}: At each round, we assign 15 jokes which have the lowest average budget consumption observed so far.
\end{itemize}
The results are presented in Figure \ref{Fig1}. As it can be seen in the plots, our OLFW algorithm obtains a reasonable utility while approximately satisfying the budget constraint as well.

\textbf{Indefinite quadratic functions.} We choose $\X = \{x\in\R^2: 0\preceq x \preceq 1\}$ and for each $t\in[T]$, we generate quadratic functions of the form $f_t(x) = \frac{1}{2}x^T H_tx+h_t^T x$ where $H_t \in \R^{2\times 2}$ is a random matrix whose entries are chosen uniformly from $[-1,0]$. We let $h_t = -H_t^T\1 $ to ensure the monotonocity of the objective. We let $T = 1000$. At each round, we randomly generate linear budget functions whose entries are chosen uniformly from $[0.5, 2.5]$ and the mean vector is $p = [1, 2]^T$. Also, we set the total budget to be $B_T = 2T$. We run the OLFW algorithm 10 times and take the respective averages for the cumulative utility and total remaining budget. We vary $\delta$, the parameter of the penalty function, in the range $[0.1, 1000]$ and plot the trade-off curve $\big(\text{i.e.}, \sum_{t=1}^{1000}f_t(x_t)$ versus $B_T - \sum_{t=1}^{1000}\langle p, x_t \rangle\big)$ for 100 chosen values of $\delta$ in Figure \ref{Fig2}. In this example, our choice of $\delta$ in the OLFW algorithm, highlighted in the plot, achieves the highest possible cumulative utility while satisfying the total budget constraint.

\textbf{Log-determinant functions.}
We choose $\X = \{x \in \R^{10}: 0 \preceq x \preceq 1\}$ and for each $t \in [T]$, we generate log-determinant functions of the form $f_t(x) = \log \det \big({\rm diag}(x)(L_t-I) + I\big)$, where each $L_t$ is a random positive definite matrix with eigenvalues falling in the range $[2,3]$. The choice of eigenvalues ensures the monotonocity of the function. Let $T= 4900$. At each round, we generate linear budget functions whose entries are chosen uniformly from the range $[0.3, 5.7]$. We run the OLFW algorithm for different choices of the step size $\mu$ and plot the cumulative utility and the total budget violation in Figure \ref{Fig3}. Our OLFW algorithm chooses $\mu$ such that the overall utility and cumulative budget consumption are balanced.
\section{Conclusion and future work}
In this work, we studied online continuous DR-submodular maximization with stochastic linear cumulative constraints. We proposed the Online Lagrangian Frank-Wolfe (OLFW) algorithm to solve this problem and we obtained the first sub-linear bounds, both in expectation and with high probability, for the regret and constraint violation of this algorithm. The current work could be further extended in a number of interesting directions. First, it is yet to be seen whether the online DR-submodular maximization setting could handle general, stochastic or adversarial, convex long-term constraints. Furthermore, it is interesting to see whether it is possible to improve the expected regret bound to match the $\O (\sqrt{T})$ high probability regret bound. Finally, studying this problem under bandit feedback (as opposed to the full information setting considered in this paper) is left to future work.
\begin{figure}[H]
    \centering
	\includegraphics[scale=0.4]{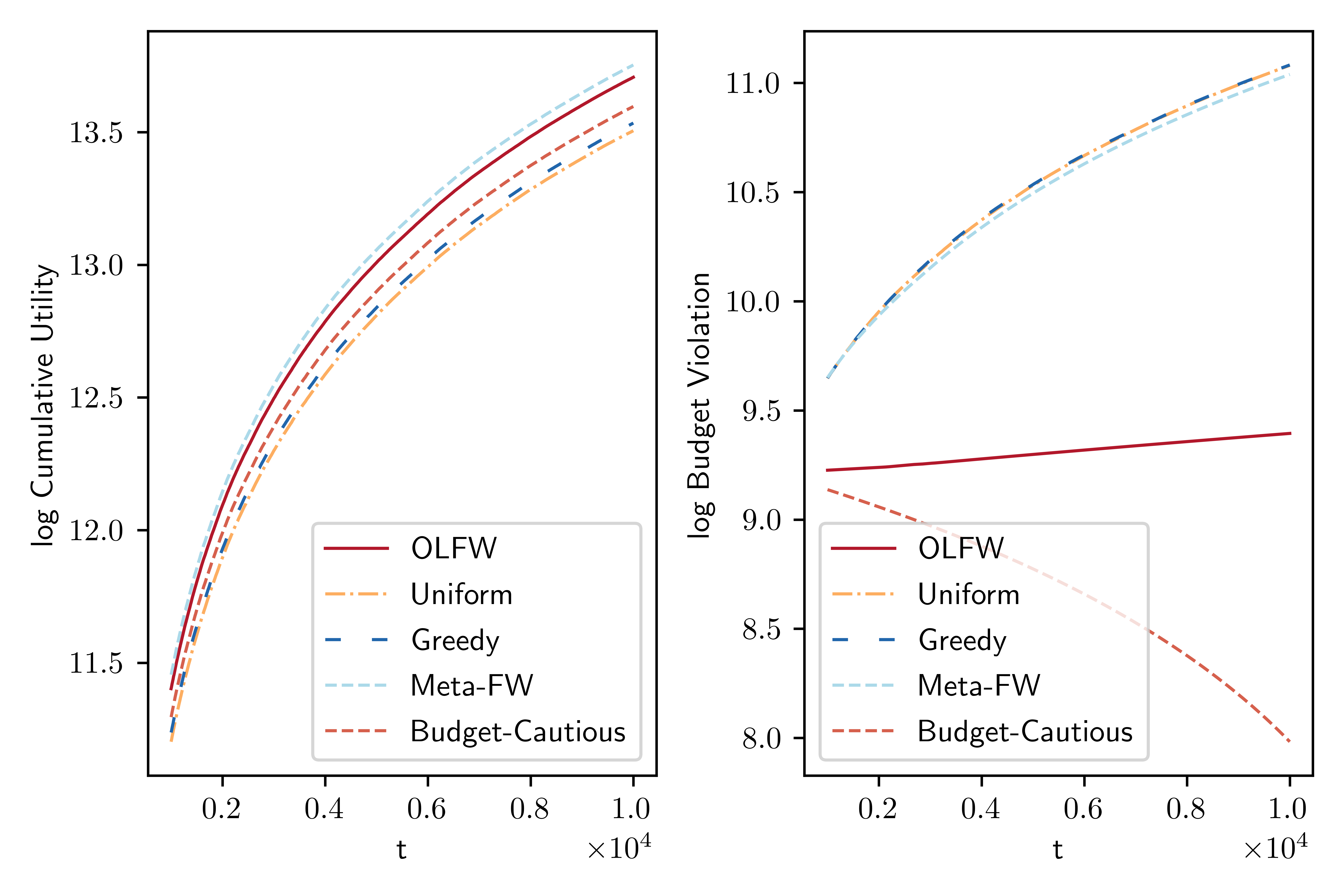}
	\caption{Comparison of the overall utility $\sum_{t=1}^{T}f_t(x_t)$ and cumulative budget violation $\sum_{t=1}^T\langle p, x_t\rangle - B_T$ for the Jester dataset.} 
	\label{Fig1}
\end{figure}
\begin{figure}[H]
    \centering
	\includegraphics[scale=0.4]{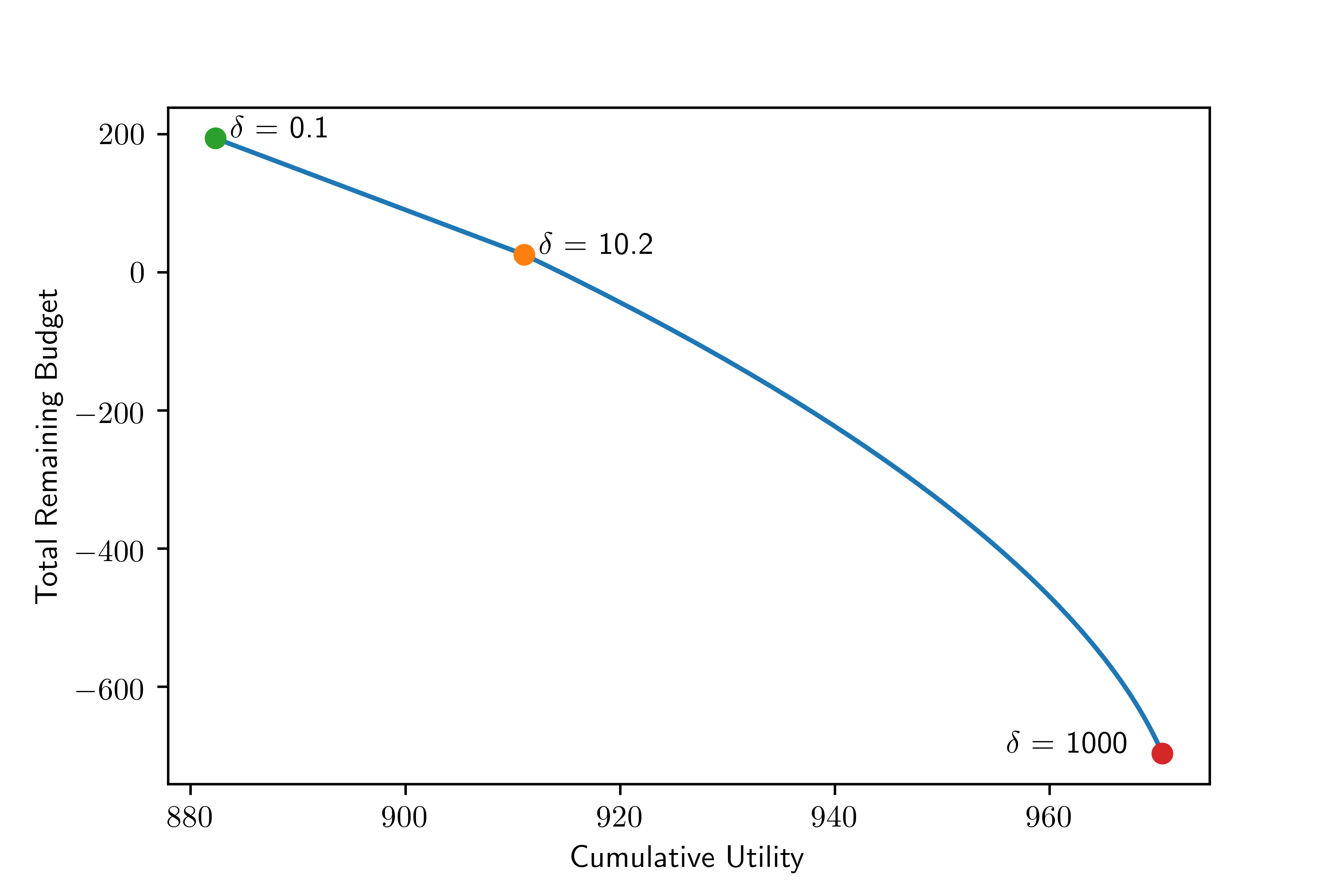}
	\caption{Trade-off between the overall utility $\sum_{t=1}^{T}f_t(x_t)$ and the total remaining budget $B_T-\sum_{t=1}^T\langle p, x_t\rangle$ of quadratic functions for different choices of parameter $\delta$. $\delta = 10.2$ is our choice of the penalty parameter.}
	\label{Fig2}
\end{figure}
\begin{figure}[H]
    \centering
	\includegraphics[scale=0.4]{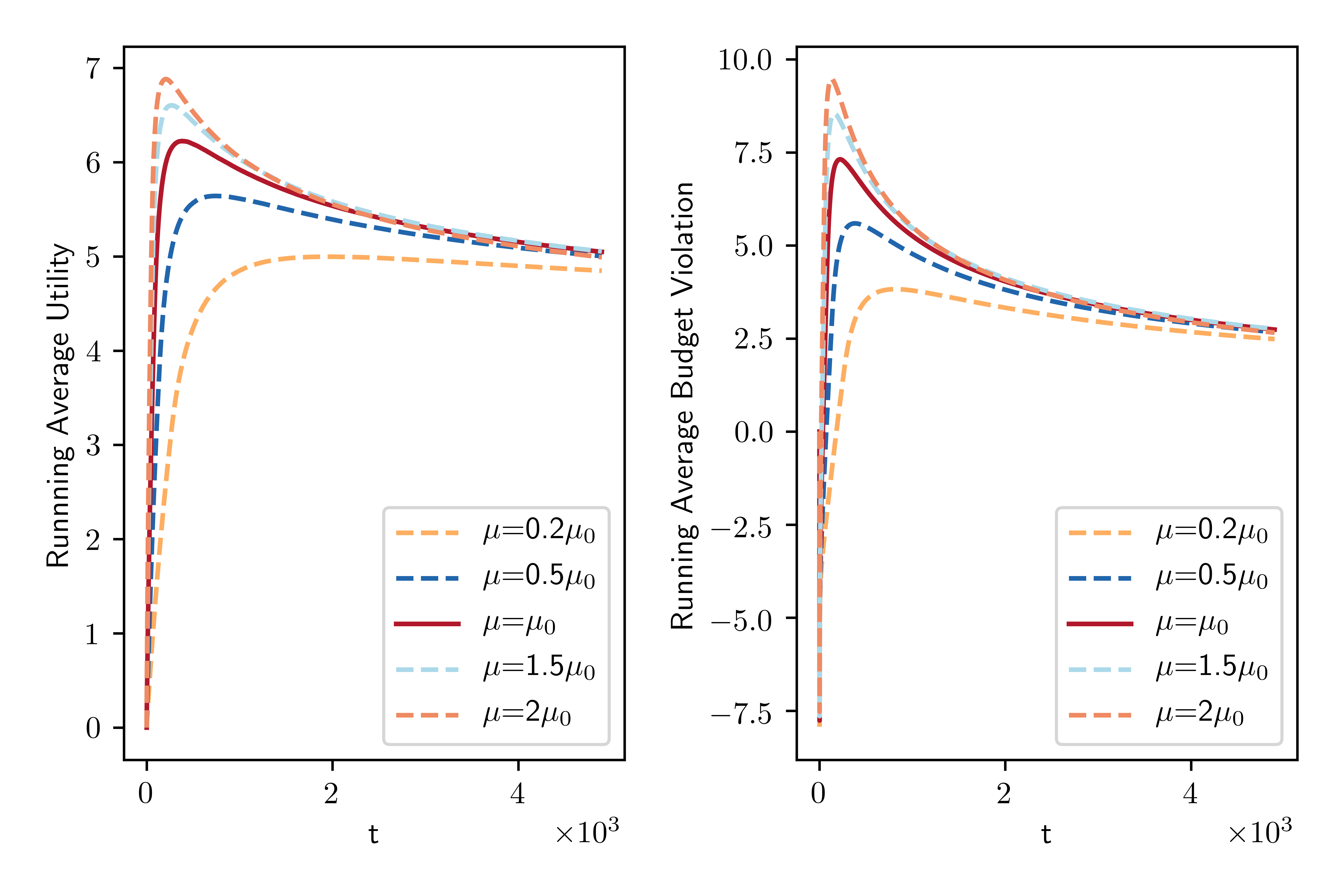}
	\caption{Running average of cumulative utility $\frac{1}{t}\sum_{\tau=1}^{t}f_{\tau}(x_{\tau})$ and running average of budget violation $\frac{1}{t}\sum_{\tau=1}^{t}(\langle p, x_{\tau}\rangle - \frac{B_T}{T})$ for different choices of the step size $\mu$, where $\mu_0 := \frac{R}{\beta\sqrt{T}}$ is our choice of step size in the OLFW algorithm.}
	\label{Fig3}
\end{figure}
\newpage
\section*{Acknowledgements}
This work was supported in part by the following grants:
NSF TRIPODS grant 1740551, 
DARPA Lagrange grant FA8650-18-2-7836, 
ONR MURI grant N0014-16-1-2710.

\bibliography{references}

\begin{thebibliography}{29}
\providecommand{\natexlab}[1]{#1}
\providecommand{\url}[1]{\texttt{#1}}
\providecommand{\urlprefix}{URL }
\expandafter\ifx\csname urlstyle\endcsname\relax
  \providecommand{\doi}[1]{doi:\discretionary{}{}{}#1}\else
  \providecommand{\doi}{doi:\discretionary{}{}{}\begingroup
  \urlstyle{rm}\Url}\fi

\bibitem[{Agrawal and Devanur(2014)}]{10.1145/2600057.2602844}
Agrawal, S.; and Devanur, N.~R. 2014.
\newblock Bandits with Concave Rewards and Convex Knapsacks.
\newblock In \emph{Proceedings of the Fifteenth ACM Conference on Economics and
  Computation}, EC ’14, 989–1006. New York, NY, USA: Association for
  Computing Machinery.
\newblock ISBN 9781450325653.
\newblock \doi{10.1145/2600057.2602844}.
\newblock \urlprefix\url{https://doi.org/10.1145/2600057.2602844}.

\bibitem[{Agrawal and Devanur(2015)}]{10.5555/2722129.2722222}
Agrawal, S.; and Devanur, N.~R. 2015.
\newblock Fast Algorithms for Online Stochastic Convex Programming.
\newblock In \emph{Proceedings of the Twenty-Sixth Annual ACM-SIAM Symposium on
  Discrete Algorithms}, SODA ’15, 1405–1424. USA: Society for Industrial
  and Applied Mathematics.

\bibitem[{Badanidiyuru, Kleinberg, and Slivkins(2018)}]{10.1145/3164539}
Badanidiyuru, A.; Kleinberg, R.; and Slivkins, A. 2018.
\newblock Bandits with Knapsacks.
\newblock \emph{J. ACM} 65(3).
\newblock ISSN 0004-5411.
\newblock \doi{10.1145/3164539}.
\newblock \urlprefix\url{https://doi.org/10.1145/3164539}.

\bibitem[{Balseiro and Gur(2019)}]{doi:10.1287/mnsc.2018.3174}
Balseiro, S.~R.; and Gur, Y. 2019.
\newblock Learning in Repeated Auctions with Budgets: Regret Minimization and
  Equilibrium.
\newblock \emph{Management Science} 65(9): 3952--3968.
\newblock \doi{10.1287/mnsc.2018.3174}.
\newblock \urlprefix\url{https://doi.org/10.1287/mnsc.2018.3174}.

\bibitem[{Besbes and Zeevi(2012)}]{besbes2012blind}
Besbes, O.; and Zeevi, A. 2012.
\newblock Blind network revenue management.
\newblock \emph{Operations research} 60(6): 1537--1550.

\bibitem[{Bian et~al.(2017)Bian, Mirzasoleiman, Buhmann, and
  Krause}]{bianGuaranteedNonconvexOptimization2016}
Bian, A.~A.; Mirzasoleiman, B.; Buhmann, J.; and Krause, A. 2017.
\newblock Guaranteed {{Non}}-Convex {{Optimization}}: {{Submodular
  Maximization}} over {{Continuous Domains}}.
\newblock In Singh, A.; and Zhu, J., eds., \emph{Proceedings of the 20th
  International Conference on Artificial Intelligence and Statistics},
  volume~54 of \emph{Proceedings of Machine Learning Research}, 111--120. Fort
  Lauderdale, FL, USA: PMLR.

\bibitem[{Calinescu et~al.(2011)Calinescu, Chekuri, P{\'a}l, and
  Vondr{\'a}k}]{calinescuMaximizingMonotoneSubmodular2011}
Calinescu, G.; Chekuri, C.; P{\'a}l, M.; and Vondr{\'a}k, J. 2011.
\newblock Maximizing a {{Monotone Submodular Function Subject}} to a {{Matroid
  Constraint}}.
\newblock \emph{SIAM Journal on Computing} 40(6): 1740--1766.
\newblock ISSN 0097-5397, 1095-7111.
\newblock \doi{10.1137/080733991}.

\bibitem[{Chen et~al.(2018)Chen, Harshaw, Hassani, and
  Karbasi}]{pmlr-v80-chen18c}
Chen, L.; Harshaw, C.; Hassani, H.; and Karbasi, A. 2018.
\newblock Projection-Free Online Optimization with Stochastic Gradient: From
  Convexity to Submodularity.
\newblock volume~80 of \emph{Proceedings of Machine Learning Research},
  814--823. Stockholmsmässan, Stockholm Sweden: PMLR.
\newblock \urlprefix\url{http://proceedings.mlr.press/v80/chen18c.html}.

\bibitem[{Chen, Hassani, and
  Karbasi(2018)}]{chenOnlineContinuousSubmodular2018}
Chen, L.; Hassani, H.; and Karbasi, A. 2018.
\newblock Online {{Continuous Submodular Maximization}}.
\newblock In Storkey, A.; and Perez-Cruz, F., eds., \emph{Proceedings of the
  Twenty-First International Conference on Artificial Intelligence and
  Statistics}, volume~84 of \emph{Proceedings of Machine Learning Research},
  1896--1905. Playa Blanca, Lanzarote, Canary Islands: PMLR.

\bibitem[{Ferreira, Simchi-Levi, and Wang(2018)}]{ferreira2018online}
Ferreira, K.~J.; Simchi-Levi, D.; and Wang, H. 2018.
\newblock Online network revenue management using thompson sampling.
\newblock \emph{Operations research} 66(6): 1586--1602.

\bibitem[{Gillenwater, Kulesza, and
  Taskar(2012)}]{gillenwaterNearOptimalMAPInference}
Gillenwater, J.; Kulesza, A.; and Taskar, B. 2012.
\newblock Near-{{Optimal MAP Inference}} for {{Determinantal Point Processes}}.
\newblock In Pereira, F.; Burges, C. J.~C.; Bottou, L.; and Weinberger, K.~Q.,
  eds., \emph{Advances in Neural Information Processing Systems 25},
  2735--2743. Curran Associates, Inc.

\bibitem[{Hazan(2016)}]{hazanIntroductionOnlineConvex2017}
Hazan, E. 2016.
\newblock Introduction to Online Convex Optimization.
\newblock \emph{Found. Trends Optim.} 2(3–4): 157–325.
\newblock ISSN 2167-3888.
\newblock \doi{10.1561/2400000013}.

\bibitem[{Huang and Kim(2019)}]{HUANG2019648}
Huang, Z.; and Kim, A. 2019.
\newblock Welfare maximization with production costs: A primal dual approach.
\newblock \emph{Games and Economic Behavior} 118: 648 -- 667.
\newblock ISSN 0899-8256.
\newblock \doi{https://doi.org/10.1016/j.geb.2018.03.003}.
\newblock
  \urlprefix\url{http://www.sciencedirect.com/science/article/pii/S0899825618300344}.

\bibitem[{Iyer, Jegelka, and Bilmes(2014)}]{iyerMonotoneClosureRelaxeda}
Iyer, R.; Jegelka, S.; and Bilmes, J. 2014.
\newblock Monotone Closure of Relaxed Constraints in Submodular Optimization:
  Connections between Minimization and Maximization.
\newblock In \emph{Proceedings of the Thirtieth Conference on Uncertainty in
  Artificial Intelligence}, UAI’14, 360–369. Arlington, Virginia, USA: AUAI
  Press.
\newblock ISBN 9780974903910.

\bibitem[{Jenatton et~al.(2016)Jenatton, Huang, Csiba, and
  Archambeau}]{jenattonOnlineOptimizationRegret2016}
Jenatton, R.; Huang, J.; Csiba, D.; and Archambeau, C. 2016.
\newblock Online Optimization and Regret Guarantees for Non-Additive Long-Term
  Constraints.
\newblock \emph{arXiv:1602.05394 [cs, math, stat]} ZSCC: 0000010.

\bibitem[{Jin et~al.(2019)Jin, Netrapalli, Ge, Kakade, and
  Jordan}]{jinShortNoteConcentration2019}
Jin, C.; Netrapalli, P.; Ge, R.; Kakade, S.~M.; and Jordan, M.~I. 2019.
\newblock A {{Short Note}} on {{Concentration Inequalities}} for {{Random
  Vectors}} with {{SubGaussian Norm}}.
\newblock \emph{arXiv:1902.03736 [cs, math, stat]} ZSCC: 0000006.

\bibitem[{Liakopoulos et~al.(2019)Liakopoulos, Destounis, Paschos, Spyropoulos,
  and Mertikopoulos}]{liakopoulosCautiousRegretMinimization}
Liakopoulos, N.; Destounis, A.; Paschos, G.; Spyropoulos, T.; and
  Mertikopoulos, P. 2019.
\newblock Cautious Regret Minimization: Online Optimization with Long-Term
  Budget Constraints.
\newblock In Chaudhuri, K.; and Salakhutdinov, R., eds., \emph{Proceedings of
  the 36th International Conference on Machine Learning}, volume~97 of
  \emph{Proceedings of Machine Learning Research}, 3944--3952. Long Beach,
  California, USA: PMLR.

\bibitem[{Mahdavi, Yang, and Jin(2012)}]{mahdaviEfficientConstrainedRegret2012}
Mahdavi, M.; Yang, T.; and Jin, R. 2012.
\newblock Efficient {{Constrained Regret Minimization}}.
\newblock \emph{arXiv:1205.2265 [cs]} ZSCC: 0000002.

\bibitem[{Mannor, Tsitsiklis, and Yu(2009)}]{mannorOnlineLearningSample}
Mannor, S.; Tsitsiklis, J.~N.; and Yu, J.~Y. 2009.
\newblock Online learning with sample path constraints.
\newblock \emph{Journal of Machine Learning Research} 10(Mar): 569--590.

\bibitem[{Neely and Yu(2017)}]{neelyOnlineConvexOptimization2017}
Neely, M.~J.; and Yu, H. 2017.
\newblock Online {{Convex Optimization}} with {{Time}}-{{Varying Constraints}}.
\newblock \emph{arXiv:1702.04783 [math]} .

\bibitem[{Nocedal and Wright(1999)}]{nocedalNumericalOptimization1999}
Nocedal, J.; and Wright, S.~J. 1999.
\newblock \emph{Numerical Optimization}.
\newblock Springer Series in Operations Research. {New York}: {Springer}.
\newblock ISBN 978-0-387-98793-4.
\newblock ZSCC: 0000956.

\bibitem[{Orabona(2019)}]{orabona2019modern}
Orabona, F. 2019.
\newblock A Modern Introduction to Online Learning.

\bibitem[{Rivera, Wang, and Xu(2018)}]{rivera2018online}
Rivera, A.; Wang, H.; and Xu, H. 2018.
\newblock The Online Saddle Point Problem and Online Convex Optimization with
  Knapsacks.

\bibitem[{Sadeghi and Fazel(2020)}]{sadeghiOnlineContinuousDRSubmodular2019}
Sadeghi, O.; and Fazel, M. 2020.
\newblock Online continuous DR-submodular maximization with long-term budget
  constraints.
\newblock In \emph{International Conference on Artificial Intelligence and
  Statistics}, 4410--4419. PMLR.

\bibitem[{Shalev-Shwartz(2012)}]{10.1561/2200000018}
Shalev-Shwartz, S. 2012.
\newblock Online Learning and Online Convex Optimization.
\newblock \emph{Found. Trends Mach. Learn.} 4(2): 107–194.
\newblock ISSN 1935-8237.
\newblock \doi{10.1561/2200000018}.
\newblock \urlprefix\url{https://doi.org/10.1561/2200000018}.

\bibitem[{Vondrak(2008)}]{vondrakOptimalApproximationSubmodular2008}
Vondrak, J. 2008.
\newblock Optimal Approximation for the Submodular Welfare Problem in the Value
  Oracle Model.
\newblock In \emph{Proceedings of the Fourtieth Annual {{ACM}} Symposium on
  {{Theory}} of Computing - {{STOC}} 08}, 67. {Victoria, British Columbia,
  Canada}: {ACM Press}.
\newblock ISBN 978-1-60558-047-0.
\newblock \doi{10.1145/1374376.1374389}.

\bibitem[{Wei, Yu, and Neely(2020)}]{weiOnlinePrimalDualMirror2019}
Wei, X.; Yu, H.; and Neely, M.~J. 2020.
\newblock Online primal-dual mirror descent under stochastic constraints.
\newblock \emph{Proceedings of the ACM on Measurement and Analysis of Computing
  Systems} 4(2): 1--36.

\bibitem[{Yuan and Lamperski(2018)}]{yuanOnlineConvexOptimization}
Yuan, J.; and Lamperski, A. 2018.
\newblock Online convex optimization for cumulative constraints.
\newblock In Bengio, S.; Wallach, H.; Larochelle, H.; Grauman, K.;
  Cesa-Bianchi, N.; and Garnett, R., eds., \emph{Advances in Neural Information
  Processing Systems 31}, 6137--6146. Curran Associates, Inc.

\bibitem[{Zinkevich(2003)}]{zinkevichOnlineConvexProgramming}
Zinkevich, M. 2003.
\newblock Online convex programming and generalized infinitesimal gradient
  ascent.
\newblock In \emph{Proceedings of the 20th international conference on machine
  learning (icml-03)}, 928--936.

\end{thebibliography}
%
\clearpage
\appendix
\section*{Appendix}

\section{Examples of non-concave DR-submodular functions}\label{examples}

\paragraph{\textbf{Multilinear extension of discrete submodular set functions.}} A discrete function $F:\{0,1\}^V\rightarrow \mathbb{R}$ is submodular if for all $j\in V$ and for all sets $A\subseteq B\subseteq V\setminus\{j\}$, the following holds:
\begin{equation*}
F(A\cup\{j\})-F(A)\geq F(B\cup\{j\})-F(B).
\end{equation*} 
The multilinear extension $f:[0,1]^V \rightarrow \mathbb{R}$ of $F$ is defined as \citep{calinescuMaximizingMonotoneSubmodular2011}
\begin{equation*}
f(x)=\sum_{S\subset V}F(S)\prod_{i\in S}x_i \prod_{j\notin S}(1-x_j)=\mathbb{E}_{S\sim x}[F(S)].
\end{equation*}
Multilinear extensions are extensively used for maximizing their corresponding discrete submodular set functions and are known to be a special case of non-concave DR-submodular functions. The Hessian matrix of this class of functions has non-positive off-diagonal entries and all its diagonal entries are zero. 
It has been shown that multilinear extensions can be efficiently computed for a large class of submodular set functions, for example, weighted matroid rank function, set cover function, probabilistic coverage function and graph cut function 
(see \citet{iyerMonotoneClosureRelaxeda} for more examples and details).
\paragraph{\textbf{Indefinite quadratic functions.}} Consider the quadratic function $f(x)=\frac{1}{2}x^T Hx+h^T x+c$. If the matrix $H$ is element-wise non-positive, $f$ is a DR-submodular function.\\
More generally, if $h_i:\R \to \R$ is concave for all $i\in[n]$ and $\theta_{ij}\leq0~\forall i\neq j$, the following function $f:\R_+^n \to \R$ is DR-submodular:
\begin{equation*}
f(x)=\sum_{i=1}^n h_i (x_i)+\sum_{i,j:i\neq j}\theta_{ij}x_i x_j.
\end{equation*}
\paragraph{\textbf{Log-determinant function.}} Let the function $f:[0,1]^n\to \R$ be defined as
\begin{equation*}
	f(x)=\log\det \big({\rm diag}(x)(L-I)+I\big),
\end{equation*} 
where $L\succeq 0$ is a positive semidefinite matrix and ${\rm diag}(x)$ denotes a diagonal matrix with vector $x$ on its diagonal. This function is used as the objective function in Determinantal Point Processes (DPPs). It was proved in \citet{gillenwaterNearOptimalMAPInference} that $f$ is a DR-submodular function.
\section{Motivating applications}\label{apps}
In the following, in order to illustrate the generality of our framework, we have listed a number of interesting applications that could be cast into our setting.
\paragraph{\textbf{Online ad allocation.}} Consider the following online ad placement problem: At round $t\in[T]$, an advertiser should choose an investment vector $x_t \in R_+^n$ over $n$ different websites where $i$-th entry of $x_t$ denotes the amount that the advertiser is willing to pay per each click on the ad on the $i$-th website (i.e., cost per click). In other words, each website has different tiers of ads and choosing $x_t$ corresponds to ordering a certain type of ad. The aggregate cost of investment is determined when the number of clicks the ad receives is revealed. Namely, the cost of such an investment is characterized by $p_t$ where the $i$-th entry of the vector $p_t$ is the number of clicks the ad on the $i$-th website receives. The stochastic nature of the number of visitors of these $n$ websites validates our choice of stochastic linear constraint functions. The advertiser needs to balance her total investment against an allotted long-term budget $B_T$. At round $t\in[T]$, the advertiser's utility function $f_t (x_t)$ is a monotone DR-submodular function with respect to the vector of investments and this function quantifies the overall impressions of the ads. DR-submodularity of the utility function characterizes the diminishing returns property of the impressions. In other words, making an ad more visible will attract proportionally fewer extra viewers because each website shares a portion of its visitors with other websites.

\paragraph{\textbf{Online task assignment in crowdsourcing markets.}} In this problem, there exists a requester with a limited budget $B_T$ that submit jobs and benefits from them being completed. There are $n$ types of jobs available to be assigned to workers arriving online. At each step $t\in[T]$, a worker arrives and the requester has to assign a bundle $x_t \in \X=\{x\in \R_+^n:0\preceq x \preceq 1\}$ of the jobs to the worker. The worker has a cost $[p_t]_i~\forall i\in[n]$ for performing one unit of the $i$-th job where $[p_t]_i$ denotes the $i$-th entry of vector $p_t$. The workers' evaluation of the cost of performing each of these $n$ jobs is governed by the fluctuations of the wages in the job market and is stochastic in nature. The rewards obtained by the requester from this job assignment is a DR-submodular function $f_t(x_t)$. The DR property of the utility function captures the diminishing returns of assigning more jobs to the worker, i.e., as the number of assigned jobs to the worker increases, she has less time to devote to each fixed job $i\in[n]$ and therefore, the reward (quality of the completed task) obtained from the worker performing one unit of job $i$ decreases. In other words, if $x\preceq y$, $\nabla_i f(x)\geq \nabla_i f(y)~\forall i\in[n]$ holds. The goal is to maximize the overall rewards obtained by the requester while the budget constraint is not violated as well. Note that if the jobs are indivisible, for all $t\in[T]$, the utility function $f_t$ corresponds to the multilinear extension of the monotone submodular set function $F_t:2^n \to \R$ and using the lossless pipage rounding technique of \citet{calinescuMaximizingMonotoneSubmodular2011}, we allocate an integral bundle of jobs to the workers at each step.
\paragraph{\textbf{Online welfare maximization with production cost \citep{HUANG2019648}}} In this problem, there are $n$ types of products for sale that may be produced on demand using a fixed limited budget $B_T$. At each step $t\in[T]$, an agent (customer) arrives online and the algorithm has to assign a bundle $x_t \in \X=\{x\in \R_+^n:0\preceq x \preceq 1\}$ of products to the agent. Producing each unit of product $i\in[n]$ costs an unknown amount $[p_t]_i$ and the production cost of the item may change over time $\{1,\dots,T\}$ because of the stochastic fluctuations of the prices of ingredients. The agent has an unknown private DR-submodular valuation function $f_t$ over the items where the DR property characterizes the diversity of the assigned bundle. Therefore, the utility obtained by assigning the bundle $x_t$ equals $f_t(x_t)$. The goal is to maximize the overall valuation of the agents while satisfying the budget constraint. Note that if the products are indivisible, for all $t\in[T]$, the utility function $f_t$ corresponds to the multilinear extension of the monotone submodular set function $F_t:2^n \to \R$ and using the lossless pipage rounding technique of \citet{calinescuMaximizingMonotoneSubmodular2011}, we allocate an integral bundle of products to the agents at each step.  
\section{Proof of Lemma 1}\label{Appendix A}
Fix $k\in[K]$. Using $L$-smoothness of the function $\L_t$, we have:
\begin{align*}
&\L_t (x_t^{(k+1)},\lambda_t)\geq\\ &\L_t(x_t^{(k)},\lambda_t)+\frac{1}{K}\langle \nabla_x \L_t (x_t^{(k)},\lambda_t),v_t^{(k)}\rangle-\frac{L}{2K^2}\|v_t^{(k)}\|_2^2\\
&\overset{\text{(a)}}\geq \L_t(x_t^{(k)},\lambda_t)+\frac{1}{K}\langle \nabla_x \L_t (x_t^{(k)},\lambda_t),v_t^{(k)}\rangle-\frac{LR^2}{2K^2}\\
&= \L_t(x_t^{(k)},\lambda_t)+\frac{1}{K}\langle \nabla_x \L_t (x_t^{(k)},\lambda_t),v_t^{(k)}-x\rangle\\
&+\frac{1}{K}\langle \nabla_x \L_t (x_t^{(k)},\lambda_t),x\rangle-\frac{LR^2}{2K^2}\\
&= \L_t(x_t^{(k)},\lambda_t)+\frac{1}{K}\langle \nabla_x \L_t (x_t^{(k)},\lambda_t),v_t^{(k)}-x\rangle\\
&+\frac{1}{K}\langle \nabla f_t (x_t^{(k)}),x\rangle-\frac{1}{K}\lambda_t \langle \nabla \widehat{g}_t(x_t^{(k)}),x\rangle-\frac{LR^2}{2K^2}\\
&\overset{\text{(b)}}= \L_t(x_t^{(k)},\lambda_t)+\frac{1}{K}\langle \nabla_x \L_t (x_t^{(k)},\lambda_t),v_t^{(k)}-x\rangle\\
&+\frac{1}{K}\langle \nabla f_t (x_t^{(k)}),x\rangle-\frac{1}{K}\lambda_t \widehat{g}_t (x)-\frac{1}{K} \lambda_t \frac{B_T}{T}-\frac{LR^2}{2K^2},
\end{align*} 
where (a) is due to the assumption that ${\rm diam}(\X)\leq R$. Note that in order to obtain (b), we have used linearity of the budget functions for all $t\in[T]$ to write $\langle \nabla \widehat{g}_t(x_t^{(k)}),x\rangle=\langle \widehat{p}_t,x\rangle=\widehat{g}_t (x)+\frac{B_T}{T}$. More general assumptions such as convexity would not be enough for the proof to go through.\\
Considering that $f_t(x)$ is monotone DR-submodular for all $t\in[T]$, we can write:
\begin{align*}
f_t (x)-f_t(x_t^{(k)}) &\overset{\text{(c)}}\leq f_t (x\vee x_t^{(k)})-f_t(x_t^{(k)})\\
&\overset{\text{(d)}}\leq \langle \nabla f_t (x_t^{(k)}),(x\vee x_t^{(k)})-x_t^{(k)}\rangle\\
&= \langle \nabla f_t(x_t^{(k)}),(x-x_t^{(k)})\vee 0\rangle\\
&\overset{\text{(e)}}\leq \langle \nabla f_t(x_t^{(k)}),x\rangle,
\end{align*}
where for $a,b\in \R^n$, $a\vee b$ denotes the entry-wise maximum of vectors $a$ and $b$, (c) and (e) are due to monotonocity of $f_t$ and (d) uses concavity of $f_t$ along non-negative directions.\\
Therefore, we conclude:
\begin{align*}
\L_t (x_t^{(k+1)},\lambda_t)&\geq \L_t (x_t^{(k)},\lambda_t)\\ &+\frac{1}{K}\langle \nabla_x \L_t (x_t^{(k)},\lambda_t),v_t^{(k)}-x\rangle\\
&+\frac{1}{K}\big(f_t (x)-f_t(x_t^{(k)})\big)-\frac{1}{K}\lambda_t \widehat{g}_t (x)\\
&-\frac{1}{K} \lambda_t \frac{B_T}{T}-\frac{LR^2}{2K^2}.
\end{align*}
Equivalently, we can write:
\begin{align}
&f_t(x)-f_t(x_t^{(k+1)})\leq (1-\frac{1}{K})\big(f_t(x)-f_t(x_t^{(k)})\big)\nonumber\\
&-\lambda_t \big(\widehat{g}_t(x_t^{(k+1)})-\widehat{g}_t(x_t^{(k)})\big)+\frac{1}{K}\lambda_t \widehat{g}_t(x)\nonumber+\frac{1}{K} \lambda_t \frac{B_T}{T}\nonumber\\
&+\frac{LR^2}{2K^2}+\frac{1}{K}\langle \nabla \L_t(x_t^{(k)},\lambda_t),x -v_t^{(k)}\rangle\nonumber\\
&=(1-\frac{1}{K})\big(f_t(x)-f_t(x_t^{(k)})\big)+\frac{1}{K} \big[\lambda_t \frac{B_T}{T}-\lambda_t \langle \widehat{p}_t, v_t^{(k)}\rangle \nonumber\\
&+\lambda_t \widehat{g}_t(x)+\frac{LR^2}{2K}+\langle \nabla \L_t(x_t^{(k)},\lambda_t),x -v_t^{(k)}\rangle\big]\nonumber.
\end{align}
Taking the sum over $t\in\{1,\dots,T\}$, we obtain:
\begin{align}
&\sum_{t=1}^{T}\big(f_{t}(x)-f_{t}(x_{t}^{(k+1)})\big)\leq\nonumber\\ &(1-\frac{1}{K})\sum_{t=1}^{T}\big(f_{t}(x)-f_{t}(x_{t}^{(k)})\big)\nonumber\\
&+\frac{1}{K} \sum_{t=1}^{T}\big[-\lambda_{t} \langle \widehat{p}_{t}, v_{t}^{(k)}\rangle +\lambda_{t} \widehat{g}_{t}(x)\nonumber\\
&+\lambda_{t} \frac{B_T}{T}+\frac{LR^2}{2K}+\langle \nabla \L_{t}(x_{t}^{(k)},\lambda_{t}),x -v_{t}^{(k)}\rangle\big]\label{reg44_3}.
\end{align}
Applying inequality ($\ref{reg44_3}$) recursively for all $k\in\{1,\dots,K\}$, we obtain:
\begin{align}
&\sum_{t=1}^{T}\big(f_{t}(x)-f_{t}(\underbrace{x_{t}^{(K+1)}}_{=x_{t}})\big)\leq\nonumber\\ &\Pi_{k=0}^{K-1}(1-\frac{1}{K})\sum_{t=1}^{T}\big(f_{t}(x)-f_{t}(x_{t}^{(0)})\big)\nonumber\\
&+\sum_{k=0}^{K-1}\frac{1}{K} \Pi_{j=k+1}^{K-1}(1-\frac{1}{K}) \sum_{t=1}^{T}\big[-\lambda_{t} \langle \widehat{p}_{t}, v_{t}^{(k)}\rangle \nonumber\\
&+\lambda_{t} \widehat{g}_{t}(x)+\lambda_{t} \frac{B_T}{T}+\frac{LR^2}{2K}\nonumber\\
&+\langle \nabla \L_{t}(x_{t}^{(k)},\lambda_{t}),x -v_{t}^{(k)}\rangle\big]\label{reg44_4}.
\end{align}
Using the regret bound of Online Gradient Ascent instance $\Eps_k~\forall k\in[K]$, the following holds:
\begin{align*}
&\sum_{t=1}^T \langle \nabla_x \L_t (x_t^{(k)},\lambda_t),x-v_t^{(k)}\rangle=\\
&\sum_{t=1}^T \langle \nabla_x \L_t (x_t^{(k)},\lambda_t),x\rangle -\sum_{t=1}^T \langle \nabla_x \L_t (x_t^{(k)},\lambda_t),v_t^{(k)}\rangle\\
&\leq \frac{R^2}{\mu}+\frac{\mu}{2}\sum_{t=1}^T \|\nabla_x \L_t (x_t^{(k)},\lambda_t)\|^2\\
&=\frac{R^2}{\mu}+\frac{\mu}{2}\sum_{t=1}^T \|\nabla_x f_t (x_t^{(k)})-\lambda_t \widehat{p}_t\|^2\\
&\overset{\text{(a)}}\leq \frac{R^2}{\mu}+\frac{\mu}{2}\sum_{t=1}^T \big(2\|\nabla_x f_t (x_t^{(k)})\|^2+2\lambda_t^2 \|\widehat{p}_t\|^2\big)\\
&\overset{\text{(b)}}\leq \frac{R^2}{\mu}+\beta^2 \mu T+ \beta^2 \mu \sum_{t=1}^T\lambda_t^2,
\end{align*}
where (a) uses the inequality $\|a+b\|^2\leq 2\|a\|^2+2\|b\|^2~\forall a,b\in \R^n$ and (b) is due to $\beta$-Lipschitzness of functions $f_t, g_t$ for all $t\in[T]$.\\
Using the inequality $(1-\frac{1}{K})^K \leq \frac{1}{e}$ in ($\ref{reg44_4}$), we have:
\begin{align}
&\sum_{t=1}^{T}\big(f_{t}(x)-f_{t}(x_{t})\big)\leq\nonumber\\ &\frac{1}{e}\sum_{t=1}^{T}\big(f_{t}(x)-f_{t}(x_{t}^{(0)})\big)\nonumber\\
&+\sum_{t=1}^{T}\sum_{k=0}^{K-1}\frac{1}{K}\big[-\lambda_{t} \langle \widehat{p}_{t}, v_{t}^{(k)}\rangle +\lambda_{t} \widehat{g}_{t}(x)\nonumber\\
&+\lambda_{t} \frac{B_T}{T}+\frac{LR^2}{2K}+\langle \nabla \L_{t}(x_{t}^{(k)},\lambda_{t}),x -v_{t}^{(k)}\rangle\big]\nonumber\\
&=\frac{1}{e}\sum_{t=1}^{T}\big(f_{t}(x_t^*)-\underbrace{f_{t}(0)}_{=0}\big)\nonumber\\
&+\sum_{t=1}^{T}\big[-\lambda_{t}\widehat{g}_{t}(x_{t})-\lambda_{t}\frac{B_T}{T} +\lambda_{t} \widehat{g}_{t}(x)\nonumber\\
&+\lambda_{t} \frac{B_T}{T}+\frac{LR^2}{2K}+\sum_{k=0}^{K-1}\frac{1}{K}\langle \nabla \L_{t}(x_{t}^{(k)},\lambda_{t}),x -v_{t}^{(k)}\rangle\big]\label{reg44_5}.
\end{align}
Rearranging the terms in ($\ref{reg44_5}$), we obtain:
\begin{align}
&\sum_{t=1}^{T}\big((1-\frac{1}{e})f_{t}(x)-f_{t}(x_{t})\big)+\sum_{t=1}^{T}\lambda_{t}\widehat{g}_{t}(x_{t})\nonumber\\ & \leq \frac{LR^2T}{2K}+\sum_{t=1}^{T}\lambda_{t} \widehat{g}_{t}(x)\nonumber \\& +\sum_{k=0}^{K-1}\frac{1}{K}\sum_{t=1}^{T}\langle \nabla \mathcal{L}_{t}(x_{t}^{(k)},\lambda_{t}),x -v_{t}^{(k)}\rangle\nonumber\\
&\leq \frac{LR^2T}{2K}+\sum_{t=1}^{T}\lambda_{t}\widehat{g}_{t}(x)+\frac{R^2}{\mu}+\beta^2 \mu T+ \beta^2 \mu \sum_{t=1}^T\lambda_t^2\label{reg44_6}.
\end{align}
Now, subtract $\sum_{t=1}^{T}\lambda_t \gamma_t$ from each side of the inequality ($\ref{reg44_6}$). Thus, obtain that:
\begin{align}
&\sum_{t=1}^{T}\big((1-\frac{1}{e})f_{t}(x)-f_{t}(x_{t})\big) \leq \frac{LR^2T}{2K}+\frac{R^2}{\mu}+\beta^2 \mu T\nonumber\\ 
& + \sum_{t=1}^{T}\lambda_{t}\widetilde{g}_{t}(x)+ \big[\beta^2 \mu \sum_{t=1}^T\lambda_t^2 -\sum_{t=1}^{T}\lambda_{t}\widetilde{g}_{t}(x_{t}) \big]\nonumber.
\end{align}
Setting $\delta=\beta^2$ and using the update rule of the algorithm for $\lambda_t~\forall t\in[T]$, we have:
\begin{align}
&\beta^2 \mu \sum_{t=1}^T \lambda_t^2 -\sum_{t=1}^T \lambda_t \widetilde{g}_t(x_t) \nonumber \\
&=\frac{\beta^2 \mu}{\delta^2 \mu^2}\sum_{t=1}^T [\widetilde{g}_t(x_t)]_+^2-\frac{1}{\delta \mu}\sum_{t=1}^T [\widetilde{g}_t(x_t)]_+ \widetilde{g}_t(x_t)\nonumber\\
&\overset{\text{(a)}}\leq \frac{1}{\delta \mu}\sum_{t=1}^T \big(\widetilde{g}_t^2(x_t)- \widetilde{g}_t(x_t)\widetilde{g}_t(x_t))\nonumber\\
&\leq 0\nonumber,
\end{align}
where we have used $[\widetilde{g}_t(x_t)]_+ \geq \widetilde{g}_t(x_t)$ and $|[\widetilde{g}_t(x_t)]_+|\leq \widetilde{g}_t(x_t)$ to obtain (a).\\
\section*{Modified Online Lagrangian Frank-Wolfe (MOLFW) Algorithm}
For the setting where only unbiased stochastic gradient estimates of the utility functions $\{f_t\}_{t=1}^T$ with bounded variance $\sigma^2$ are available, inspired by the variance reduction technique introduced by \citet{pmlr-v80-chen18c}, we propose the Modified Online Lagrangian Frank-Wolfe (MOLFW) algorithm in Algorithm 2. Compared to the OLFW algorithm, the MOLFW algorithm uses $\langle v_t^{(k)},d_t^{(k)}\rangle$ (instead of $\langle v_t^{(k)},\nabla_x \L_t (x_t^{(k)},\lambda_t)\rangle$) as the payoff to be received by $\mathcal{E}_k~\forall k\in[K]$ at round $t\in[T]$. We analyze the performance of the MOLFW algorithm below.\\
Fix $k\in[K]$. Using $L$-smoothness of the function $\L_t$, we have:
\begin{align*}
&\L_t (x_t^{(k+1)},\lambda_t)\geq\\ &\L_t(x_t^{(k)},\lambda_t)+\frac{1}{K}\langle \nabla_x \L_t (x_t^{(k)},\lambda_t),v_t^{(k)}\rangle-\frac{L}{2K^2}\|v_t^{(k)}\|_2^2\\
&\overset{\text{(a)}}\geq \L_t(x_t^{(k)},\lambda_t)+\frac{1}{K}\langle \nabla_x \L_t (x_t^{(k)},\lambda_t),v_t^{(k)}\rangle-\frac{LR^2}{2K^2}\\
&= \L_t(x_t^{(k)},\lambda_t)+\frac{1}{K}\langle \nabla_x \L_t (x_t^{(k)},\lambda_t)-d_t^{(k)},v_t^{(k)}-x\rangle\\
&+\frac{1}{K}\langle \nabla_x \L_t (x_t^{(k)},\lambda_t),x\rangle+\frac{1}{K}\langle d_t^{(k)},v_t^{(k)}-x\rangle-\frac{LR^2}{2K^2}\\
&= \L_t(x_t^{(k)},\lambda_t)+\frac{1}{K}\langle \nabla_x \L_t (x_t^{(k)},\lambda_t)-d_t^{(k)},v_t^{(k)}-x\rangle\\
&+\frac{1}{K}\langle \nabla f_t(x_t^{(k)}),x\rangle-\frac{1}{K}\lambda_t\langle \widehat{p}_t,x\rangle+\frac{1}{K}\langle d_t^{(k)},v_t^{(k)}-x\rangle\\
&-\frac{LR^2}{2K^2},
\end{align*} 
where (a) is due to the assumption that ${\rm diam}(\X)\leq R$. Considering that $f_t(x)$ is monotone DR-submodular for all $t\in[T]$, we can write:
\begin{align*}
f_t (x)-f_t(x_t^{(k)}) &\overset{\text{(b)}}\leq f_t (x\vee x_t^{(k)})-f_t(x_t^{(k)})\\
&\overset{\text{(c)}}\leq \langle \nabla f_t (x_t^{(k)}),(x\vee x_t^{(k)})-x_t^{(k)}\rangle\\
&= \langle \nabla f_t(x_t^{(k)}),(x-x_t^{(k)})\vee 0\rangle\\
&\overset{\text{(d)}}\leq \langle \nabla f_t(x_t^{(k)}),x\rangle,
\end{align*}
where for $a,b\in \R^n$, $a\vee b$ denotes the entry-wise maximum of vectors $a$ and $b$, (b) and (d) are due to monotonocity of $f_t$ and (c) uses concavity of $f_t$ along non-negative directions.\\
Therefore, we conclude:
\begin{align*}
\L_t (x_t^{(k+1)},\lambda_t)&\geq \L_t (x_t^{(k)},\lambda_t)\\ &+\frac{1}{K}\big(\langle \nabla_x \L_t (x_t^{(k)},\lambda_t)-d_t^{(k)},v_t^{(k)}-x\rangle\\
&+f_t (x)-f_t(x_t^{(k)})-\lambda_t \widehat{g}_t (x)\\
&-\lambda_t \frac{B_T}{T}+\langle d_t^{(k)},v_t^{(k)}-x \rangle\big)-\frac{LR^2}{2K^2}.
\end{align*}
Using the Young's inequality, we have:
\begin{align*}
    &\langle \nabla_x \L_t (x_t^{(k)},\lambda_t)-d_t^{(k)},v_t^{(k)}-x\rangle \geq\\ &-\frac{1}{2\beta_k}\|\nabla_x \L_t (x_t^{(k)},\lambda_t)-d_t^{(k)}\|^2-\frac{\beta_k}{2}\|v_t^{(k)}-x\|^2 \geq\\
    &-\frac{1}{2\beta_k}\|\nabla_x \L_t (x_t^{(k)},\lambda_t)-d_t^{(k)}\|^2-\frac{R^2\beta_k}{2}.
\end{align*}
Therefore, we can equivalently write:
\begin{align}
&f_t(x)-f_t(x_t^{(k+1)})\leq (1-\frac{1}{K})\big(f_t(x)-f_t(x_t^{(k)})\big)\nonumber\\
&-\lambda_t \big(\widehat{g}_t(x_t^{(k+1)})-\widehat{g}_t(x_t^{(k)})\big)+\frac{1}{K}\lambda_t \widehat{g}_t(x)\nonumber+\frac{1}{K} \lambda_t \frac{B_T}{T}\nonumber\\
&+\frac{LR^2}{2K^2}+\frac{1}{K}\langle d_t^{(k)},x -v_t^{(k)}\rangle+\frac{1}{K}\frac{R^2 \beta_k}{2}\nonumber\\
&+\frac{1}{K}\frac{1}{2\beta_k}\|\nabla_x \L_t (x_t^{(k)},\lambda_t)-d_t^{(k)}\|^2\nonumber\\
&=(1-\frac{1}{K})\big(f_t(x)-f_t(x_t^{(k)})\big)+\frac{1}{K} \big[\lambda_t \frac{B_T}{T}-\lambda_t \langle \widehat{p}_t, v_t^{(k)}\rangle \nonumber\\
&+\lambda_t \widehat{g}_t(x)+\frac{LR^2}{2K}+\langle d_t^{(k)},x -v_t^{(k)}\rangle+\frac{R^2\beta_k}{2}\nonumber\\
&+\frac{1}{2\beta_k}\|\nabla_x \L_t (x_t^{(k)},\lambda_t)-d_t^{(k)}\|^2\big]\nonumber.
\end{align}
Taking the sum over $t\in\{1,\dots,T\}$, we obtain:
\begin{align}
&\sum_{t=1}^{T}\big(f_{t}(x)-f_{t}(x_{t}^{(k+1)})\big)\leq\nonumber\\ &(1-\frac{1}{K})\sum_{t=1}^{T}\big(f_{t}(x)-f_{t}(x_{t}^{(k)})\big)\nonumber\\
&+\frac{1}{K} \sum_{t=1}^{T}\big[-\lambda_{t} \langle \widehat{p}_{t}, v_{t}^{(k)}\rangle +\lambda_{t} \widehat{g}_{t}(x)\nonumber\\
&+\lambda_{t} \frac{B_T}{T}+\frac{LR^2}{2K}+\langle d_t^{(k)},x-v_t^{(k)}\rangle+\frac{R^2\beta_k}{2}\nonumber\\
&+\frac{1}{2\beta_k}\|\nabla_x \L_t (x_t^{(k)},\lambda_t)-d_t^{(k)}\|^2\big]\label{reg44_3s}.
\end{align}
\begin{algorithm}[ht]
	\caption{Modified Online Lagrangian Frank-Wolfe (MOLFW)}
	\begin{algorithmic}
		\STATE \textbf{Input:} $\X$ is the constraint set, $T$ is the horizon, $\mu>0$, $\delta>0$, $\{\gamma_t\}_{t=1}^{T}$, $\{\rho_k\}_{k=1}^K$ and $K$.
		\STATE \textbf{Output:} $\{x_t :1\leq t\leq T\}$.
		\STATE Initialize $K$ instances $\{\Eps_k\}_{k\in[K]}$ of Online Gradient Ascent with step size $\mu$ for online maximization of linear functions over $\mathcal{X}$.
		\FOR{$t=1$ {\bfseries to} $T$}
		\STATE $x_t^{(1)}=0$.
		\FOR{$k=1$ {\bfseries to} $K$}
		\STATE Let $v_t^{(k)}$ be the output of oracle $\mathcal{E}_k$ from round $t-1$.
		\STATE $x_t^{(k+1)}=x_t^{(k)}+\frac{1}{K} v_t^{(k)}$.
		\ENDFOR
		\STATE Set $x_t =x_t^{(K+1)}$.
		\STATE Let $\widehat{p}_t := \frac{1}{t-1}\sum_{s=1}^{t-1}p_s$ for $t >1$.
		\STATE Let \[ \widetilde{g}_t(\cdot)= 
		\begin{cases}
		\langle \widehat{p}_t, \cdot \rangle -\frac{B_T}{T} &\quad\text{expectation analysis (\rom{1})}\\
		\langle \widehat{p}_t, \cdot \rangle -\frac{B_T}{T} - \gamma_t &\quad\text{high probability analysis (\rom{2})}\\
		\end{cases}
		\] 
		\STATE Set $\lambda_{t}= \frac{[\widetilde{g}_t(x_t)]_+}{\delta \mu}$ for $t > 1$ and 0 otherwise.
		\STATE Play $x_t$ and observe the Lagrangian function $\L_t(x_t,\lambda_t)=f_t(x_t)-\lambda_t \widetilde{g}_t(x_t)+\frac{\delta \mu}{2}\lambda_t^2$
		\STATE $d_t^{(0)}=-\lambda_t \widehat{p}_t$.
		\FOR{$k=1$ {\bfseries to} $K$}
		\STATE $d_t^{(k)}=(1-\rho_k)d_t^{(k-1)}+\rho_k \nabla_x \L_t (x_t^{(k)},\lambda_t)$.
		\STATE Feedback $\langle v_t^{(k)},d_t^{(k)}\rangle$ as the payoff to be received by $\mathcal{E}_k$.
		\ENDFOR
		\ENDFOR
	\end{algorithmic}
\end{algorithm}
Applying inequality ($\ref{reg44_3s}$) recursively for all $k\in\{1,\dots,K\}$, we obtain:
\begin{align}
&\sum_{t=1}^{T}\big(f_{t}(x)-f_{t}(\underbrace{x_{t}^{(K+1)}}_{=x_{t}})\big)\leq\nonumber\\ &\Pi_{k=0}^{K-1}(1-\frac{1}{K})\sum_{t=1}^{T}\big(f_{t}(x)-f_{t}(x_{t}^{(0)})\big)\nonumber\\
&+\sum_{k=0}^{K-1}\frac{1}{K} \Pi_{j=k+1}^{K-1}(1-\frac{1}{K}) \sum_{t=1}^{T}\big[-\lambda_{t} \langle \widehat{p}_{t}, v_{t}^{(k)}\rangle \nonumber\\
&+\lambda_{t} \widehat{g}_{t}(x)+\lambda_{t} \frac{B_T}{T}+\frac{LR^2}{2K}\nonumber\\
&+\langle d_t^{(k)},x-v_t^{(k)}\rangle+\frac{R^2\beta_k}{2}\nonumber\\
&+\frac{1}{2\beta_k}\|\nabla_x \L_t (x_t^{(k)},\lambda_t)-d_t^{(k)}\|^2\big]\label{reg44_4s}.
\end{align}
Using the regret bound of Online Gradient Ascent instance $\Eps_k~\forall k\in[K]$, the following holds:
\begin{align*}
&\sum_{t=1}^T \langle d_t^{(k)},x-v_t^{(k)}\rangle=\\
&\sum_{t=1}^T \langle d_t^{(k)},x\rangle -\sum_{t=1}^T \langle d_t^{(k)},v_t^{(k)}\rangle\\
&\leq \frac{R^2}{\mu}+\frac{\mu}{2}\sum_{t=1}^T \|d_t^{(k)}\|^2\\
&\leq \frac{R^2}{\mu}+\mu\sum_{t=1}^T \|\nabla_x \L_t (x_t^{(k)},\lambda_t)-d_t^{(k)}\|^2\\
&+\mu\sum_{t=1}^T\|\nabla_x \L_t (x_t^{(k)},\lambda_t)\|^2\\
&\overset{\text{(a)}}\leq \frac{R^2}{\mu}+\mu\sum_{t=1}^T \|\nabla_x \L_t (x_t^{(k)},\lambda_t)-d_t^{(k)}\|^2\\
&+\mu\sum_{t=1}^T \big(2\|\nabla f_t (x_t^{(k)})\|^2+2\lambda_t^2 \|\widehat{p}_t\|^2\big)\\
&\overset{\text{(b)}}\leq \frac{R^2}{\mu}+2\beta^2 \mu T+ 2\beta^2 \mu \sum_{t=1}^T\lambda_t^2\\
&+\mu\sum_{t=1}^T \|\nabla_x \L_t (x_t^{(k)},\lambda_t)-d_t^{(k)}\|^2,
\end{align*}
where (a) uses the inequality $\|a+b\|^2\leq 2\|a\|^2+2\|b\|^2~\forall a,b\in \R^n$ and (b) is due to $\beta$-Lipschitzness of functions $f_t, g_t$ for all $t\in[T]$.\\
Using the inequality $(1-\frac{1}{K})^K \leq \frac{1}{e}$ in ($\ref{reg44_4s}$), we have:
\begin{align}
&\sum_{t=1}^{T}\big(f_{t}(x)-f_{t}(x_{t})\big)\leq\nonumber\\ &\frac{1}{e}\sum_{t=1}^{T}\big(f_{t}(x)-f_{t}(x_{t}^{(0)})\big)\nonumber\\
&+\sum_{t=1}^{T}\sum_{k=0}^{K-1}\frac{1}{K}\big[-\lambda_{t} \langle \widehat{p}_{t}, v_{t}^{(k)}\rangle +\lambda_{t} \widehat{g}_{t}(x)\nonumber\\
&+\lambda_{t} \frac{B_T}{T}+\frac{LR^2}{2K}+\langle d_t^{(k)},x-v_t^{(k)}\rangle+\frac{R^2\beta_k}{2}\nonumber\\
&+\frac{1}{2\beta_k}\|\nabla_x \L_t (x_t^{(k)},\lambda_t)-d_t^{(k)}\|^2\big]\nonumber\\
&=\frac{1}{e}\sum_{t=1}^{T}\big(f_{t}(x)-\underbrace{f_{t}(0)}_{=0}\big)\nonumber\\
&+\sum_{t=1}^{T}\big[-\lambda_{t}\widehat{g}_{t}(x_{t})-\lambda_{t}\frac{B_T}{T} +\lambda_{t} \widehat{g}_{t}(x)\nonumber\\
&+\lambda_{t} \frac{B_T}{T}+\frac{LR^2}{2K}+\sum_{k=0}^{K-1}\frac{1}{K}\big(\langle d_t^{(k)},x-v_t^{(k)}\rangle+\frac{R^2\beta_k}{2}\nonumber\\
&+\frac{1}{2\beta_k}\|\nabla_x \L_t (x_t^{(k)},\lambda_t)-d_t^{(k)}\|^2\big)\big]\label{reg44_5s}.
\end{align}
Rearranging the terms in ($\ref{reg44_5s}$), we obtain:
\begin{align}
&\sum_{t=1}^{T}\big((1-\frac{1}{e})f_{t}(x)-f_{t}(x_{t})\big)+\sum_{t=1}^{T}\lambda_{t}\widehat{g}_{t}(x_{t})\nonumber\\ & \leq \frac{LR^2T}{2K}+\sum_{t=1}^{T}\lambda_{t} \widehat{g}_{t}(x)\nonumber \\
&+\sum_{k=0}^{K-1}\frac{1}{K}\big(\langle d_t^{(k)},x-v_t^{(k)}\rangle+\frac{R^2\beta_k}{2}\nonumber\\
&+\frac{1}{2\beta_k}\|\nabla_x \L_t (x_t^{(k)},\lambda_t)-d_t^{(k)}\|^2\big)\nonumber\\
&\leq \frac{LR^2T}{2K}+\sum_{t=1}^{T}\lambda_{t}\widehat{g}_{t}(x)+\frac{R^2}{\mu}+2\beta^2 \mu T+ 2\beta^2 \mu \sum_{t=1}^T\lambda_t^2\nonumber\\
&+\sum_{t=1}^T\sum_{k=0}^{K-1}\frac{1}{K}\big(\frac{R^2\beta_k}{2}+\frac{1}{2\beta_k}\|\nabla_x \L_t (x_t^{(k)},\lambda_t)-d_t^{(k)}\|^2\nonumber\\
&+\mu\sum_{t=1}^T \|\nabla_x \L_t (x_t^{(k)},\lambda_t)-d_t^{(k)}\|^2\big)\label{reg44_6s}.
\end{align}
Now, subtract $\sum_{t=1}^{T}\lambda_t \gamma_t$ from each side of the inequality ($\ref{reg44_6s}$). Thus, we obtain:
\begin{align}
&\sum_{t=1}^{T}\big((1-\frac{1}{e})f_{t}(x)-f_{t}(x_{t})\big)\nonumber\\ 
&\leq \frac{LR^2T}{2K}+\sum_{t=1}^{T}\lambda_{t}\widetilde{g}_{t}(x)+\frac{R^2}{\mu}+2\beta^2 \mu T\nonumber\\
&+\big[2\beta^2 \mu \sum_{t=1}^T\lambda_t^2-\sum_{t=1}^T\lambda_t\widetilde{g}_t(x_t)\big]\nonumber\\
&+\sum_{t=1}^T\sum_{k=0}^{K-1}\frac{1}{K}\big(\frac{R^2\beta_k}{2}+\frac{1}{2\beta_k}\|\nabla_x \L_t (x_t^{(k)},\lambda_t)-d_t^{(k)}\|^2\nonumber\\
&+\mu\sum_{t=1}^T \|\nabla_x \L_t (x_t^{(k)},\lambda_t)-d_t^{(k)}\|^2\big)\nonumber.
\end{align}
Setting $\delta=2\beta^2$ and using the update rule of the algorithm for $\lambda_t~\forall t\in[T]$, we have:
\begin{align}
&2\beta^2 \mu \sum_{t=1}^T \lambda_t^2 -\sum_{t=1}^T \lambda_t \widetilde{g}_t(x_t) \nonumber\\
&=\frac{2\beta^2 \mu}{\delta^2 \mu^2}\sum_{t=1}^T [\widetilde{g}_t(x_t)]_+^2-\frac{1}{\delta \mu}\sum_{t=1}^T [\widetilde{g}_t(x_t)]_+ \widetilde{g}_t(x_t)\nonumber\\
&\overset{\text{(a)}}\leq \frac{1}{\delta \mu}\sum_{t=1}^T \big(\widetilde{g}_t^2(x_t)- \widetilde{g}_t(x_t)\widetilde{g}_t(x_t))\nonumber\\
&\leq 0\nonumber,
\end{align}
where we have used $[\widetilde{g}_t(x_t)]_+ \geq \widetilde{g}_t(x_t)$ and $|[\widetilde{g}_t(x_t)]_+|\leq \widetilde{g}_t(x_t)$ to obtain (a).\\
Using Theorem 3 of \citet{pmlr-v80-chen18c}, we have:
\begin{equation*}
    \E\big[\|\nabla_x \L_t (x_t^{(k)},\lambda_t)-d_t^{(k)}\|^2\big]\leq \frac{Q}{(k+4)^{2/3}}~\forall t\in[T],k\in[K],
\end{equation*}
where $Q=\max\{\max_{t\in[T]}\|\nabla f_t(0)\|^24^{2/3},4\sigma^2+6L^2R^2\}$. Therefore, taking expectation of both sides, setting $\beta_k=\frac{Q^{1/2}}{R(k+4)^{1/3}}$ and simplifying the result, we obtain:
\begin{align*}
 \E[R_T]&\leq \frac{R^2}{\mu}+2\beta^2\mu T+\frac{LR^2T}{2K}+\sum_{t=1}^T\E[\lambda_t \widetilde{g}_t(x)]+\frac{3Q\mu T}{K^{2/3}}\\
 &+\frac{3RQ^{1/2}T}{2K^{1/3}}.
\end{align*}
Therefore, if we set $K=\mathcal{O}(T^{3/2})$, we can analyze the expected regret and constraint violation of the MOLFW algorithm in a similar fashion to Theorem 3 and Theorem 4, and we obtain similar performance bounds. 
\section{Lemma \ref{lem4.2}}
\begin{lemma}\label{lem4.2}\citep{jinShortNoteConcentration2019}
	For all $t \in [T]$, the following holds: 
	\begin{align*}
	\P\{\|p_t - p\|_2 \geq \zeta\} \leq 2e^{-\frac{\zeta^2}{2\sigma^2}} \hspace{3mm}\forall \zeta \in \R.
	\end{align*}
\end{lemma}
\begin{proof}
From assumption A3, we have that $\|p_t\| \leq \beta$. Thus, Lemma 1 of \citet{jinShortNoteConcentration2019} holds with norm sub-gaussian parameter $\sigma = c\beta$ for some universal constant $c$. The result follows immediately. \qed
\end{proof}
\section{Lemma \ref{lem4.3}}
\begin{lemma}\label{lem4.3}
	For $t = 2, 3, \ldots, T$, the following holds with probability at least $1-\frac{\eps}{T}$:
	\begin{align*}
	\|\widehat{p}_t - p\| \leq c'\sigma\sqrt{\frac{\log(\frac{2nT}{\eps})}{t-1}}.
	\end{align*}
\end{lemma}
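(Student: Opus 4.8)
The plan is to write $\widehat{p}_t - p = \frac{1}{t-1}\sum_{s=1}^{t-1}(p_s - p)$ and bound the norm of this average of i.i.d.\ centered random vectors by a Hoeffding-type concentration inequality for vectors with sub-Gaussian norm. First I would record that, by Lemma~\ref{lem4.2}, each summand $X_s := p_s - p$ satisfies $\P\{\|X_s\| \geq \zeta\} \leq 2e^{-\zeta^2/(2\sigma^2)}$ for all $\zeta$, i.e.\ $X_s$ is norm-subGaussian with parameter $\sigma = c\beta$; moreover $\E[X_s] = 0$ and the $X_s$ are independent, so in particular $(X_s)_{s\geq 1}$ is a zero-mean martingale difference sequence with respect to the natural filtration (the hypothesis needed by the vector concentration bound). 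Since $t \geq 2$, the empirical mean $\widehat{p}_t$ is well defined.

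Next I would invoke the Hoeffding-type inequality for norm-subGaussian random vectors (Lemma~6 / Corollary~7 of \cite{jinShortNoteConcentration2019}): there is a universal constant $c'$ such that, for any $\delta \in (0,1)$,
\begin{equation*}
\P\Big\{\Big\|\sum_{s=1}^{t-1}(p_s - p)\Big\| \geq c'\sigma\sqrt{(t-1)\log(2n/\delta)}\Big\} \leq \delta .
\end{equation*}
Taking $\delta = \eps/T$ and dividing the event through by $t-1$ gives, with probability at least $1-\eps/T$,
\begin{equation*}
\|\widehat{p}_t - p\| \leq c'\sigma\sqrt{\frac{\log(2nT/\eps)}{t-1}},
\end{equation*}
which is exactly the claimed bound.

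The step that needs care is the dimension dependence, and this is really the only substantive point. A naive argument — apply scalar Hoeffding coordinate-wise, using $[p_s]_i \in [0,\beta_p]$, then union bound over the $n$ coordinates and pass from $\|\cdot\|_\infty$ to $\|\cdot\|_2$ — loses an extra $\sqrt{n}$, yielding a bound of order $\sqrt{n\log(nT/\eps)/(t-1)}$ rather than $\sqrt{\log(nT/\eps)/(t-1)}$. Reducing the dimension dependence to the logarithmic $\log(2n/\delta)$ inside the square root is precisely what the vector-valued concentration result of \cite{jinShortNoteConcentration2019} provides, so the proof amounts to checking that its hypotheses hold — namely the uniform norm-subGaussian parameter $\sigma = c\beta$ supplied by Lemma~\ref{lem4.2} and the i.i.d.\ (hence martingale-difference) structure of the samples — and then substituting $\delta = \eps/T$.
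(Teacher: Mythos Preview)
Your proof is correct and follows essentially the same route as the paper: write $\widehat{p}_t-p$ as the average of the centered i.i.d.\ vectors $p_s-p$, note via Lemma~\ref{lem4.2} that each summand is norm-subGaussian with parameter $\sigma$, and apply Corollary~7 of \cite{jinShortNoteConcentration2019} with failure probability $\eps/T$. Your additional remarks about the martingale-difference structure and the suboptimal $\sqrt{n}$ factor from a naive coordinate-wise argument are helpful exposition but not part of the paper's terser proof.
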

\begin{proof}
Note that $\|\widehat{p}_t - p\| = \frac{1}{t-1}\|\sum_{s=1}^{t-1}(p_s - p)\|$ and since vector $p_s - p$ satisfies the result of Lemma \ref{lem4.2}, we can apply Corollary 7 of \citet{jinShortNoteConcentration2019} to the random vectors $\{p_s - p\}_{s=1}^{t-1}$ and obtain:
\begin{align*}
\|\sum_{s=1}^{t-1}(p_s - p)\| &\leq c'\sqrt{\sum_{s=1}^{t-1}\sigma^2\log(\frac{2nT}{\eps})}\\
&= \sqrt{t-1}c'\sigma\sqrt{\log(\frac{2nT}{\eps})}.
\end{align*} 
Combining the above equations, we get the desired result. \qed
\end{proof}
\section{Proof of Lemma 2}
Consider the events $\Eps_t : =\{\|\widehat{p}_t - p\| \leq c'\sigma\sqrt{\log(\frac{2nT}{\eps})/(t-1)} \}$ for $t \in [T]/\{1\}$ and $\Eps := \{\sum_{t=2}^{T}\|\widehat{p}_t - p\| \leq C\sigma\sqrt{T\log(2nT/\eps)}\}$.  Then, apply union bound to $\bigcup_{t=2}^{T}\Eps_t^c$ with the observation that $\bigcap_{t=2}^{T}\Eps_t \subset \Eps$. Explicitly,
\begin{align*}
\P(\text{$\bigcup_{t=2}^{T}\Eps_t$}) & \leq \sum_{t=2}^{T}\P(\Eps_t^c),\\
1 - \P(\bigcap_{t=2}^{T}\Eps_t) &\leq \sum_{t=2}^{T}(1 - \P(\Eps_t)),\\
1 - \sum_{t=2}^{T}(1 - \P(\Eps_t)) & \leq \P(\bigcap_{t=2}^{T}\Eps_t) \leq \P(\Eps).
\end{align*}
Using the result of Lemma \ref{lem4.3}, we obtain the result.
\section{Proof of Lemma 4}
Bounding $\sum_{t=1}^{T}[\widetilde{g}_t(x_t)]_+ $ from below, we obtain:
\begin{align*}
\sum_{t=1}^{T}[\widetilde{g}_t(x_t)]_+ &\geq \sum_{t=1}^{T}\widetilde{g}_t(x_t)\\
&= \sum_{t=1}^{T}g(x_t) + \sum_{t=1}^{T}(\widehat{g}_t(x_t) - g(x_t)) - \sum_{t=1}^{T}\gamma_t\\
&\geq \sum_{t=1}^{T}g(x_t) - \sum_{t=1}^{T}|\widehat{g}_t(x_t) - g(x_t)| - \sum_{t=1}^{T}\gamma_t\\
&= C_T - \sum_{t=1}^{T}|\langle \widehat{p}_t-p, x_t\rangle| - \sum_{t=1}^{T}\gamma_t \\
&\geq C_T - \sum_{t=1}^{T}\|\widehat{p}_t - p\| \|x_t\|- \sum_{t=1}^{T}\gamma_t\\
&\geq C_T - R\sum_{t=1}^{T}\|\widehat{p}_t - p\| - \sum_{t=1}^{T}\gamma_t.
\end{align*}  
Rearranging the above inequality, we obtain the desired result.
\section{Proof of Lemma 5}\label{Appendix B}
We can write:
\begin{align*}
\E\|\widehat{p}_t-p\|^2 & = \E(\widehat{p}_t - p)^T(\widehat{p}_t-p) \nonumber \\ 
& = \E [Tr\big((\widehat{p}_t - p)^T(\widehat{p}_t-p)\big)] \nonumber \\
& = \E [Tr\big((\widehat{p}_t - p)(\widehat{p}_t-p)^T\big)] \nonumber \\
& = Tr\big(\E [(\widehat{p}_t - p)(\widehat{p}_t-p)^T]\big) \nonumber \\
& = Tr(Cov(\widehat{p}_t)) \nonumber \\
& = Tr\big(\frac{\Sigma}{t-1}\big) \nonumber \\
& = \frac{Tr(\Sigma)}{t-1}.
\end{align*}
\section{Proof of Theorem 4}
Similar to the proof of Theorem 2, we begin by setting $x=0$ to be the fixed vector in (2). We obtain 
\begin{align*}
\sum_{t=1}^T [\widetilde{g}_t(x_t)]_+ &\leq \frac{T}{B_T}R\beta F\sqrt{T} + \frac{TR\beta}{B_T}(\frac{LR^2}{2}+2R\beta).
\end{align*}
Now, we lower bound the left-hand side following the idea of the proof for Lemma 4. Thus, we obtain
\begin{align*}
C_T& \leq \frac{T}{B_T}R\beta F\sqrt{T} \nonumber +\frac{TR\beta}{B_T}(\frac{LR^2}{2}+2R\beta) + \underbrace{R\sum_{t=1}^{T}\|\widehat{p}_t-p\|}_{\text{(C)}}.
\end{align*}
In order to bound (C), we take expectation on both sides to obtain: 
\begin{align*}
\E[\text{(C)}] &=R \E[\sum_{t=2}^{T}\|\widehat{p}_t - p\|]\\
&= R\sum_{t=2}^{T}\E\|\widehat{p}_t - p\|\\
&= R\sum_{t=2}^{T}\E\sqrt{\| \widehat{p}_t - p\|^2}\\
&\leq R\sum_{t=2}^{T}\sqrt{\E\|\widehat{p}_t - p\|^2},
\end{align*}
where the last inequality is due to Jensen's inequality.\\
Therefore, we have:
\begin{align*}
\E[\text{(C)}] &\leq R\sum_{t=2}^{T}\frac{\sqrt{Tr(\Sigma)}}{\sqrt{t-1}}\\
&\leq R\sqrt{Tr(\Sigma)}\sqrt{T}. 
\end{align*}
Combining the inequalities, we obtain the result.

\end{document}